\newcommand{\rt}{\rightarrow}
\newcommand{\rat}{\rightarrowtail}
\newcommand{\re}{\twoheadrightarrow}
\newcommand{\lrt}{\longrightarrow}
\newcommand{\lf}{\leftarrow}
\newcommand{\llf}{\longleftarrow}
\newcommand{\st}{\stackrel}
\newcommand{\Md}{\mathsf{Mod}}
\def\ker{\operatorname{\mathsf{ker}}}
\newcommand{\T}{\mathcal{T}}
\newcommand{\D}{\mathbb{D} }
\newcommand{\Q}{\mathcal{Q} }
\newcommand{\C}{\mathscr{C} }
\newcommand{\G}{\mathcal{G} }
\newcommand{\m}{\mathfrak{m}}
\newcommand{\p}{\mathfrak{p} }
\def\proj{\operatorname{\mathsf{proj}}}
\def\inj{\operatorname{\mathsf{inj}}}
\newcommand{\id} {\mathsf{id}}
\def\Ext{\operatorname{{\mathsf{Ext}}}}
\def\hom{\operatorname{{\mathsf{Hom}}}}
\def\uhom{\operatorname{\underline{\mathsf{Hom}}}}
\def\Tor{\operatorname{\mathsf{Tor}}}
\def\hom{\operatorname{\mathsf{Hom}}}
\def\G{\mathcal{G}}
\def\syz{\mathsf{\Omega}}
\def\coh{\operatorname{\mathsf{coh}}}
\def\x{\mathsf {X}}
\def\b{\mathsf {b}}
\def\p{\mathcal P}
\def\A{\mathcal A}
\def\U{\mathcal U}
\def\nat{\operatorname{\mathsf{Nat}}}
\def\ruf{\operatorname{\mathsf{RUF}}}
\def\luf{\operatorname{\mathsf{LUF}}}
\def\c{\operatorname{\underline{\mathscr{C}}}}
\def\Syz{\operatorname{\boldsymbol{\mathsf\Omega}}}
\def\al{\operatorname{\boldsymbol{\alpha}}}
\def\be{\operatorname{\boldsymbol{\beta}}}
\def\ga{\operatorname{\boldsymbol{\gamma}}}
\def\th{\operatorname{\boldsymbol{\theta}}}
\def\et{\operatorname{\boldsymbol{\eta}}}
\newtheorem{theorem}{Theorem}[section]
\newtheorem{lem}[theorem]{Lemma}
\newtheorem{prop}[theorem]{Proposition}
\theoremstyle{definition}
\newtheorem{dfn}[theorem]{Definition}
\newtheorem{example}[theorem]{Example}
\newtheorem{obs}[theorem]{Observation}
\newtheorem{rem}[theorem]{Remark}
\newtheorem{s}[theorem]{}
\theoremstyle{plain}
\theoremstyle{definition}
\numberwithin{equation}{section}
\begin{document}

\title[Triangulated structure of phantom stable categories]
{phantom stable categories of $n$-Frobenius categories are triangulated}
\author[Bahlekeh, Fotouhi, Salarian and Sartipzadeh]{Abdolnaser Bahlekeh, Fahimeh Sadat Fotouhi, Shokrollah Salarian and Atousa Sartipzadeh}

\address{Department of Mathematics, Gonbad-Kavous University, Postal Code:4971799151, Gonbad-Kavous, Iran}
\email{bahlekeh@gonbad.ac.ir}

\address{ School of Mathematics, Institute for Research in Fundamental Science (IPM), P.O.Box: 19395-5746, Tehran, Iran}\email{ffotouhi@ipm.ir}

\address{Department of Pure Mathematics, Faculty of Mathematics and Statistics,
University of Isfahan, P.O.Box: 81746-73441, Isfahan,
 Iran and \\ School of Mathematics, Institute for Research in Fundamental Science (IPM), P.O.Box: 19395-5746, Tehran, Iran}
 \email{Salarian@ipm.ir}

\address{Department of Pure Mathematics, Faculty of Mathematics and Statistics,
University of Isfahan, P.O.Box: 81746-73441, Isfahan, Iran}
 \email{asartipz@gmail.com}

\subjclass[2020]{Primary 18G80; Secondary  18G65, 18G15, 18E10.}

\keywords{phantom stable category; triangulated category; $n$-Frobenius category.}
\thanks{This work is based upon research funded by Iran National Science Foundation (INSF) under project No. 4048273.}
\thanks{The research of the second author was in part supported by a grant from IPM  (No. 1405130044).}

\begin{abstract}
Let $n$ be a non-negative integer. Motivated by the universal property of the stable category of Frobenius categories, the authors in \cite{bfss} generalized the stabilization of Frobenius categories to $n$-Frobenius categories, defining the phantom stable category. For  an $n$-Frobenius category $\C$, this consists of 
 a pair $(\C_{\p}, T)$, where $\C_{\p}$ is an additive category having the same objects as $\C$ and $T:\C\rt\C_{\p}$  an additive covariant functor that vanishes on $n$-$\Ext$-phantom morphisms and sends $n$-$\Ext$-invertible morphisms to isomorphisms, and $T$ has the universal property with respect to these conditions. The existence of the phantom stable category $(\C_{\p}, T)$ and its several interesting properties have appeared in \cite{bfss}.    In this paper, we  show  that the syzygy functor $\syz$, constructed from $n$-projectives, from $\C$ to $\C_{\p}$ is not only an additive functor, but also  it induces an auto-equivalence functor $\Syz$  on $\C_{\p}$. Then, as the main result, it is proved that phantom stable category   $(\C_{\p}, T)$ is triangulated, with $\Syz$ serving as its shift functor. 
\end{abstract}
\maketitle

\tableofcontents

\section{Introduction}
The significance of Frobenius categories lies in their profound connection to triangulated categories. A fundamental result, due to Happel \cite{ha1}, establishes that the stable category of a Frobenius category carries a natural triangulated structure, and in particular, most of the triangulated categories arise in this way, see for example \cite{bu, ha1}. Following  Keller \cite{ke}, such categories are termed “algebraic triangulated” categories and are distinguished by their admitting a natural  dg-enhancement in the sense of Bondal and Kapranov \cite{bk}. This makes them a more tractable and conceptually rich class than general triangulated categories.

 {These facts encouraged  us to study the notion of $n$-Frobenius categories and make an attempt to extend the concept of the stabilization of Frobenius categories to  $n$-Frobenius categories. Our principal focus in this direction is to consider the universal property for the stable category of a Frobenius category. In order to explain our idea more explicitly, we need to recall  some notions.  Assume that $\A$ is an abelian category and $\C$ is a full additive extension-closed subcategory of $\A$. The exact category $\C$ is said to be an $n$-Frobenius category, with $n$ a non-negative integer, if  $\C$ has enough $n$-projective and $n$-injective objects and these two classes of objects coincide, where $n$-projectivity and $n$-injectivity are defined via the vanishing of $(n+1)$-th Yoneda $\Ext$. Thus  Frobenius categories are indeed  0-Frobenius categories in our sense. It is known that  any abelian category with non-zero $n$-projective objects admits a non-trivial $n$-Frobenius subcategory, see \cite[Theorem 2.15]{bfss}.

Assume that $\C$ is an $n$-Frobenius category. A given morphism $f: M\rt N$ in $\C$, is said to be an $n$-$\Ext$-phantom (resp. invertible) morphism, provided that the induced natural transformation $\Ext^{n+1}(-, f):\Ext_{\C}^{n+1}(-, M)\lrt\Ext^{n+1}_{\C}(-, N)$ vanishes (resp. is an equivalence of functors). Evidently, the class of all $n$-$\Ext$-phantom morphisms is an ideal of $\C$. It is established that if $\C$ is a Frobenius category, then $0$-$\Ext$-phantom morphisms are precisely those morphisms that factor through projective objects, see Proposition \ref{stable}. Considering this fact together \cite[Theorem 1]{hz}, we may revisit the stabilization of  Frobenius categories, as follows.  
\begin{dfn} Let $\C$ be a Frobenius category. An additive category $\C'$ equipped with a covariant  additive functor $T:\C\rt\C'$ is said to be the stable category of $\C$, if:\\
(1) for any $0$-$\Ext$-phantom  morphism $f$ in $\C$, $T(f)=0$ in $\C'$;\\ (2) for any additive covariant functor $T':\C\rt\D$  that satisfies the first assertion, there exists a unique additive covariant functor $F:\C'\rt\D$ such that $FT=T'$.
\end{dfn}

By the idea of the above universal property, the authors in \cite{bfss} have been able to extend the stabilization of Frobenius categories to  $n$-Frobenius categories, which is called {\em the phantom stable category}.  Precisely, we have the following.
 }
      
 \begin{dfn}(\cite[Definition 9.2]{bfss}) Assume that $\C$ is an $n$-Frobenius category. A couple $(\C_{\p}, T)$, with $\C_{\p}$ an additive category and $T:\C\lrt\C_{\p}$  an additive covariant functor, is said to be the {\it phantom stable category of $\C$}, if:\\ (1) for any $n$-$\Ext$-phantom morphism $\varphi$, $T(\varphi)=0$ in $\C_{\p}$;\\ (2)  $T(s)$ is an isomorphism in $\C_{\p}$, for any $n$-$\Ext$-invertible morphism $s$;\\ (3) any additive covariant functor $T':\C\lrt\D$ satisfying the conditions (1) and (2), factors in a unique way through $T$.
\end{dfn}

The existence of the phantom stable category $(\C_{\p}, T)$ of an $n$-Frobenius category $\C$, has appeared in \cite[Theorem 9.5]{bfss}. Indeed, $\C_{\p}$ is an additive category with the same object of $\C$, and for every two objects $M, N\in\C_{\p}$, the hom-set  $\hom_{\C_{\p}}(M, N)=(\Ext^n_{\C}(M, \syz^nN)/{\p}, \delta_N)$ modulo an equivalence relation, where  $\delta_N$ is a unit conflation of $N$ of length $n$. Moreover, for each two objects $M, N\in\C$, 
 $\p(M, N)$ is the additive subgroup of $\Ext^n_{\C}(M, N)$ consisting of all conflations  $\ga$ of length $n$,  arising from a pull-back along a morphism $M\st{f}\rt P$ with $P$ $n$-projective. Furthermore, the addition of morphisms is defined via the Baer sum operation, while the composition is defined in terms of pull-back (or equivalently, push-out) diagrams.  In addition, $T:\C\rt\C_{\p}$ is an additive functor which is identical on objects and for a morphism $f:M\rt N$ in $\C$, $T(f)=(\overline{\delta_Nf}, \delta_N)$  (see Section 2  for the notations and the detailed construction of $\C_{\p}$).
  
 Since the stable category of a Frobenius category is triangulated, where the shift functor is given by the syzygy functor, a natural question arises:  does the phantom stable category $(\C_{\p}, T)$ of an $n$-Frobenius category $\C$ admit a triangulated structure? The main goal of this paper is to answer this question  affirmatively. Precisely, 
 we show that  the syzygy functor $\syz$, defined using $n$-projective objects from $\C$ to $\C_{\p}$, provides an additive covariant functor that sends each $n$-$\Ext$-phantom (resp. invertible) morphism to zero (an isomorphism) in $\C_{\p}$, see Theorem \ref{fun}. This, together with the universal property of the phantom stable category,  induces an auto-equivalence functor $\Syz$ on $\C_{\p}$, which is  again called the syzygy funcor, see Theorems \ref{fun1} and \ref{sem}. As the main result of the paper, we prove that  the phantom stable category $(\C_{\p}, T)$ is a triangulated category, where the shift functor is given by $\Syz$. Moreover, the exact triangles are given up to isomorphism of triangles by  $\Syz N\st{T(g)}\lrt C\st{T(h)}\lrt M\st{T(f)}\lrt N$, that arises from the pull-back of a syzygy sequence $\syz N\rat Q\re N$ with $Q\in n$-$\proj\C$, along a morphism $f:M\rt N$ in $\C$, see Theorem \ref{lt}.  At the end of the paper, we show that if $\C$ is a Frobenius category, then its stabilization as a 0-Frobenius category, $(\c, F)$, is triangle equivalent to the  stabilization $(\C_{\p}, T)$ of $\C$, viewed as an $n$-Frobeius category,  where $n\geq 1$ is an integer,  see Theorem \ref{thm}.



The paper is organized as follows. In Section 2, we recall the definition of $n$-Frobenius categories, provide several examples, and  briefly outline the construction  of the phantom stable category from \cite{bfss}. Section 3 is devoted to defining the shift functor of the triangulated structure. Indeed,  we show that  the syzygy {functor} $\syz$, using $n$-projective objects, from $\C$ to $\C_{\p}$, induces an  auto-equivalence functor $\Syz$ on $\C_{\p}$. To verify that the  phantom stable category is triangulated, we need to describe commutative diagrams in $\C_{\p}$ in terms of commutative diagrams in $\C$. This is carries out  in Section 4. Finally, in Section 5,  we prove that the phantom stable category $(\C_{\p}, T)$ admits a triangulated structure.

Throughout the paper, $\A$ is an abelian category and $\C$ a full additive extension-closed 
 subcategory of $\A$. It is known that $\C$ is an exact category, where conflations are precisely short exact sequences in $\A$ whose terms are in $\C$, see \cite[Lemma 10.20]{buh}. The integer $n\geq 0$ is fixed.


\section{Phantom stable categories }
This section is devoted to giving a brief description of the structure of the phantom stable category of an $n$-Frobenius category.  First, we recall the definition of  an $n$-Frobenius category and  some of  its properties  from \cite{bfss}. Also, several examples of $n$-Frobenius categories are presented.

\begin{s}Assume that $t\geq 1$ is an integer. An extension  $0\rt N\rt X_{t-1}\rt\cdots\rt X_0\rt M\rt 0$ in $\A$, is said to be a  conflation (of length $t$) in $\C$, if it is obtained by splicing $t$ conflations  in $\C$, and it will be denoted by $N\rat X_{t-1}\rt\cdots\rt X_{0}\re M$. So conflations are exactly conflations of length 1.  For a given conflation  $N\st{f}\rat Z\st{g}\re M$ in $\C$,  $f$ (resp. $g$) is called an inflation (resp. a deflation), see \cite{ke}. The set of all equivalence classes of conflations (of length $t$) of the form $N\rat X_{t-1}\rt\cdots\rt X_{0}\re M$, is denoted by $\Ext^t_{\C}(M, N)$, see for detail  \cite[Chapter VII]{mit}. We also set $\Ext^0_{\C}(-, -)=\hom_{\C}(-, -)$.  For the sake of simplicity, in $\Ext^t_{\C}(-, -)$,  the subscript $\C$ will be suppressed. Furthermore, if there is no ambguity, we often write `a conflation' instead of `a conflation of length $t$'.
\end{s}

\begin{dfn} Assume that $n$ is a non-negative integer.  \\
(1) A given object $P\in\C$ (resp. $I\in\C$) is said to be an {\it $n$-projective} (resp. {\it $n$-injective}) object of $\C$, if $\Ext^i(P, X)=0$ (resp. $\Ext^i(X, I)=0$) for all integers $i>n$ and all objects $X\in\C$. The class of all $n$-projective (resp. $n$-injective) objects will be denoted by $n$-$\proj\C$ (resp. $n$-$\inj\C$). \\
(2) The exact category $\C$ is said to have enough $n$-projectives, provided that each object $M$ in $\C$ fits into a deflation $P\re M$ where $P$ is $n$-projective. Dually one has the notion of having enough $n$-injectives.\\
(3) The exact category $\C$ is called $n$-Frobenius, if
it has enough $n$-projectives and $n$-injectives and $n$-$\proj\C$ coincides with $n$-$\inj\C$.\\ It follows from the definition that  0-Frobenius categories are indeed Frobenius categories in the usual sense. Moreover, any $n$-Frobenius category is a $k$-Frobenius category, for all $k> n$.
\end{dfn}

\begin{rem}
{{It should be remarked that there are also other notions related to the  Frobenius categories and their stable categories in  \cite{ja} and \cite{lz}, where the notions of Frobenius $n$-exact categories and Frobenius $n$-exangulated categories, for each positive integer $n$, have been defined and studied.  We point out that, if the base category is exact and $n=1$, then these notions recover the classical notion of a Frobenius exact category. So, in this case, these concepts coincide with the 0-Frobenius category in our sense. }

We should emphasize that although the aforementioned three concepts are related to the classical notion of Frobenius exact categories, their ideas are totally different.
{Indeed, $n$-exangulated categories defined in \cite{hln, hln1}, as a higher dimensional analog of extriangulated categories which were introduced by Nakaoka and Palu \cite{np} by extracting those properties of $\Ext^1$ on exact categories and on triangulated categories that seem relevant from the perspective of cotorsion pairs. However, our motivation for studying $n$-Frobenius categories and their stabilizations lies in the fact that there exist categories that have enough $n$-projectives with no non-zero projective objects.} For example, there are no projective objects in the category of quasi-coherent sheaves over the projective line $\mathbf{P^1}(R)$, where $R$ is a commutative ring with identity, see \cite[Corollary 2.3]{ee} and also \cite[Exercise III 6.2(b)]{har}. However, a result of Serre indicates that the category of coherent sheaves over a projective scheme has enough locally free sheaves, see \cite[Corollary 5.18]{har}. More generally, as mentioned in \cite{bfss}, the argument given in the proof of \cite[Lemma 1.12]{or} reveals that for a semi-separated noetherian scheme $\x$ of finite Krull dimension, there exists a non-negative integer $n$ such that locally free sheaves of finite rank, are $n$-projective objects in the category of coherent sheaves, $\coh(\x)$, and in particular, the subcategory of $n$-projective objects of $\coh(X)$, is non-trivial.
}
\end{rem}

In what follows, we provide some examples of $n$-Frobenius categories.

\begin{example}\label{remexam} Let $(R, \m)$ be a $d$-dimensional commutative noetherian local ring.\\ (1) The category of all (finitely generated) $R$-modules of finite Gorenstein projective dimension is a $d$-Frobenius category, and moreover, its $d$-projective objects are all (finitely generated) modules with finite projective dimension. Particularly, if $R$ is Gorenstein, then the category of all (finitely generated) $R$-modules, is a $d$-Frobenius category, see \cite[Example 2.16]{bfss}.\\ (2) The class of all Gorenstein flat $R$-modules, $\mathsf{GF}(R)$, is a $d$-Frobenius category such that its $d$-projective objects are flat modules. To see this, first one should note that since $R$ is noetherian, $\mathsf{GF}(R)$ is closed under extensions, see \cite[Theorem 3.7]{holm}. So it will be an exact subcategory of $\Md(R)$, the category of all $R$-modules. Now assume that $M\in\mathsf{GF}(R)$ and $\mathbf{F}_{\bullet}$ is its complete flat resolution. In view of \cite[Theorem 4.18]{ms}, $\hom_R(\mathbf{F}_{\bullet}, C)$ is an acyclic complex, for all cotorsion flat modules $C$, implying that $\Ext^i_R(M, C)=0$ for any $i\geq 1$. Furthermore, as $R$ is $d$-dimensional, the projective (and so, cotorsion flat) dimension of any flat module is at most $d$, see \cite[Theorem 4.2.8]{xu} and \cite[Theorem 3.4]{ho}. Consequently, $\Ext_R^i(M, F)=0=\Ext^i_R(F, M)$ for all flat modules $F$ and $i\geq d+1$. In particular, applying \cite[Proposition 2.18]{bfss} yields that $d$-projective (also $d$-injective) objects of $\mathsf{GF}(R)$ are exactly flat modules, and so, the claim follows.\\ (3) Let $(R, \m)$ be a commutative noetherian Gorenstein local ring and $\Q$ a finite acyclic quiver. Then the path algebra $R\Q$ is a Gorenstein order in the sense of Auslander \cite{auslander1967functors}. Assume that $\G$ is the category consisting of all lattices over $R\Q$, that is, those $R\Q$-modules $M$ which are finitely generated Gorenstein projective over $R$.
Since the category $\mathsf{Gp}(R)$ of Gorenstein projective $R$-modules is closed under extensions, the same is true for $\G$. Thus, it will be an exact subcategory of $\Md(R\Q)$. In addition,
according to the second short exact sequence appeared in \cite[Page 4429]{ehs} and its dual, any  $R\Q$-module $M$ which is projective over $R$, is a 1-projective and 1-injective object in $\Md(R\Q)$, and particularly, $\G$ will be a 1-Frobenius category. We should emphasize that $\G$ would not be a Frobenius category,  whenever $\Q$ has at least two vertices.
\end{example}

Recall that a left module $M$ over an associtive ring $\Lambda$ is said to be Gorenstein projective (resp. Gorenstein flat), if there exists an acyclic complex of left projective  (resp. flat) modules: $\mathbf{F}_{\bullet}:\cdots\rt F_1\rt F_0\rt F_{-1}\rt\cdots$ such that $M=\ker(F_0\rt F_{-1})$ and for any left projective module $Q$ (resp.
right injective module $I$), the functor  $\hom_{\Lambda}(-, Q)$, (resp. $I\otimes_{\Lambda}-$) leaves the complex $\mathbf{F}_{\bullet}$ exact. Such a complex $\mathbf{F}_{\bullet}$ is called a complete projective (resp. flat) resolution of $M$, see \cite{enochs2011relative}.

In the remainder of the paper, unless stated otherwise, we assume that $\C$ is an $n$-Frobenius category. 

\begin{s}\label{ss}
(1) Assume that  $N$ is an object of $\C$. For any integer $k\geq 1$, $N$ fits into conflations of length $k$, $\syz^kN\rat P_{k-1}\rt\cdots\rt P_0\re N$ and $N\rat P^1\rt\cdots\rt P^k\re\syz^{-k}N$ such that $P_i, P^i$'s  are  $n$-projective, which will be called   unit conflations, and also syzygy sequences. Also $\syz^kN$ is called a $k$-th syzygy of $N$. Clearly, unit conflations are not uniquely determined. We denote the class of all  unit conflations of length $k$  ending at  $N$ (resp.  beginning with  $N$) by $\U_k(N)$ (resp. $\U^k(N)$). Unit conflations usually will be depicted by $\delta$. It is worth noting that for any object $N\in\C$, $\U_0(N)$ and $\U^0(N)$ only contain the identity morphism $1_N$.\\
(2) Assume that $\delta, \delta' \in \U^n(N)$ (resp. $\delta_1, \delta_2 \in\U_n(N)$). Then there exist deflations (resp. inflations) $a, a'$ with $n$-projective kernels (resp. cokernels) such that $\delta a=\delta'a'$ (resp. $a\delta_1=a'\delta_2$), see \cite[Proposition 5.5]{bfss}. In this situation $[\delta a, \delta'a']$  (resp. $[a\delta_1, a'\delta_2]$) is called a  co-angled pair (resp.  an angled pair). In some cases we also denote it by $\delta\st{a}\llf\delta''\st{a'}\lrt\delta'$ (resp. $\delta_1\st{a}\lrt\delta''\st{a'}\llf\delta_2$), where $\delta''=\delta a$  (resp. $\delta''=a\delta_1$).\\ (3) Suppose that $\ga\in\Ext^n(M, \syz^n N)$ is given, where $n\geq 1$ is an integer. It is known that there exist unit conflations $\delta\in\U^n(\syz^n N)$, $\delta'\in\U_n(M)$ and morphisms $f,g$ in $\C$ such that $\ga=\delta f$ and $\ga=g\delta'$, see \cite[Proposition 5.2]{bfss}. These are called  a right and a left unit factorization of $\ga$, respectively, and are abbreviated by $\ruf$ and $\luf$.\\ (4) Assume that $f:M\rt N$ is a morphism in $\C$ and $\delta_N\in\U_1(N)$. So taking an $\luf$ of $\delta_Nf$ gives us a unit conflation $\delta_M\in\U_1(M)$ and a morphism $f':\syz M\rt\syz N$ in $\C$ such that $f'\delta_M=\delta_Nf$.
\end{s}

As mentioned in the introduction, modifying the stable category of a Frobenius category, we define the stabilization of  an $n$-Frobenius category $\C$, which is called the phantom stable category. In order to give its structure, we need to recall some notions.
\begin{s}\label{pp}
{\sc $\p$-subfunctor.} For each two objects $X, Y$ in $\C$,  $\p(X, Y)$ is the additive subgroup of $\Ext^n(X, Y)$ consisting of all conflations $\ga$ that arise as a pull-back along a morphism $X\st{f}\rt P$ with $P$ $n$-projective. Equivalently, it arises from a push-out along a morphism $Q\st{f'}\rt Y$ with $Q$ $n$-projective, see \cite[Proposition 4.3]{bfss}. Conflations of these types are referred to as $\p$-conflations. If there is no ambiguity, we denote $\p(-, -)$ by $\p$. One should note that as being a $\p$-conflation is preserved under pull-back and push-out,  $\p$ will be a subfunctor of $\Ext^n$, see \cite{as, fght}. It should be pointed out that, since $\Ext^0(-, -)=\hom_{\C}(-, -)$, over Frobenius categories (or equivalently, 0-Frobenius categories), the  push-out as well as the pull-back of a conflation along a morphism can be interpreted as composition of morphisms in $\C$. Particularly, in this situation,
$\p$-conflations are exactly those morphisms that factor through a projective object.\\
Assume that $M\st{f}\rt N$ is a morphism in $\C$. Then for any object $X\in\C$, there exist induced maps $\Ext^n(X, M)/{\p}\rt\Ext^n(X, N)/{\p}$ and $\Ext^n(N, X)/{\p}\rt\Ext^n(M, X)/{\p}$ that assign  each object $\ga+\p$ to ${f\ga}+\p$ and ${\ga f}+\p$, respectively. In what follows, a given object $\ga+\p\in\Ext^n/{\p}$, is denoted by $\bar{\ga}$. Here $f\ga$ (resp. $\ga f$) stands for the push-out (resp. pull-back) of $\ga$ along $f$.
\end{s}
Here we recall two kinds of morphisms that play an essential role in the stabilization of $n$-Frobenius categories.  

\begin{s}{\sc $n$-$\Ext$-phantom and $n$-$\Ext$-invertible morphisms.} Assume that $f:M\rt N$ is a morphism in $\C$.\\ (1)
$f$ is called an $n$-$\Ext$-invertible morphism, provided that   for any object $X\in\C$, the induced morphism $\Ext^n(X, M)/{\p}\rt\Ext^n(X, N)/{\p}$ is an isomorphism. It can be seen that,  this is equivalent to stating that $\Ext^n(N, X)/{\p}\rt\Ext^n(M, X)/{\p}$) is also an isomorphism,  see \cite[Corollaries 3.4 and 4.8]{bfss}. These results imply that $f$ is an $n$-$\Ext$-invertible morphism if and only if the natural transformation $\Ext^{n+1}(-, f):\Ext^{n+1}(-,M)\lrt\Ext^{n+1}(-,N)$ (or equivalently, $\Ext^{n+1}(f, -)$) is an equivalence of functors. 
It should be pointed out that $n$-$\Ext$-invertible morphisms are called quasi-invertible morphisms in \cite{bfss}. Such morphisms are also called quasi-isomorphisms in \cite{hr}. It is known that morphisms with kernel and cokernel in $n$-$\proj\C$, are $n$-$\Ext$-invertible morphisms, see \cite[Corollary 3.6]{bfss}.\\
 (2) $f$ is said to be an $n$-$\Ext$-phantom morphism, if   for any object $X\in\C$, the induced morphism $\Ext^n(X, M)/{\p}\rt\Ext^n(X, N)/{\p}$  (or equivalently, $\Ext^n(N, X)/{\p}\lrt\Ext^n(M, X)/{\p}$) is zero, see \cite[Theorem 6.7 and Definition 6.8]{bfss}. By \cite[Corollary 6.9]{bfss},  $f$ is an $n$-$\Ext$-phantom morphism, if and only if  the natural transformation $\Ext^{n+1}(-, f):\Ext^{n+1}(-,M)\lrt\Ext^{n+1}(-,N)$ (or equivalently, $\Ext^{n+1}(f, -)$) vanishes.  It is evident that a morphism factoring through an $n$-projective object in $\C$ is an $n$-$\Ext$-phantom morphism. Moreover, \cite[Proposition 6.2]{bfss} and its dual, reveal that $f$ is $n$-$\Ext$-phantom if and only if for any $\delta\in\U_n(N)$ (or equivalently, $\delta'\in\U^n(M)$), $\delta f$ ($f\delta'$) is a $\p$-conflation.
\end{s}

\begin{rem}It is worth noting that  1-$\Ext$-phantom morphisms are $\p$-phantom morphisms in the sense of Fu et. al. \cite{fght}. Moreover, since by \cite[Corollary 6.9]{bfss}, a given morphism $f$ is an $n$-$\Ext$-phantom morphism if and only if $\Ext^{n+1}f=0$, $n$-$\Ext$-phantom morphisms are indeed $(n+1)$-$\Ext$-phantom morphisms in the sense of Mao \cite{mao}.
\end{rem}

\begin{s}
The concept of phantom morphisms arises in topology, in the study of maps between CW-complexes \cite{mc}. A map $f: X \rt Y$ between CW-complexes is said to be a phantom map, if its restriction to each skeleton $X_n$ is null homotopic.
This notion was extended later by Neeman \cite{ne} into the setting of a triangulated category, and also developed in the stable category of a finite group ring by Benson and Gnacadja \cite{gn, be2, be1, be}. The notion of a phantom morphism has been generalized to the category of $R$-modules over an associative ring $R$ by Herzog in \cite{he}, where he calls a morphism $f: M\rt N$ of left $R$-modules a phantom morphism if the induced natural transformation $\Tor^R_1(-, f):\Tor^R_1(-, M)\rt\Tor^R_1(-, N)$ of homology functors vanishes. Similarly, the morphism $f$ is said to be an $\Ext$-phantom morphism \cite{hext}, if the induced natural transformation $\Ext^1_R(-, f):\Ext^1_R(-,M)\rt\Ext^1_R(-,N)$ of cohomology functors is zero, over finitely presented left $R$-modules. As a higher dimensional generalizations of phantom morphisms and $\Ext$-phantom morphisms, Mao \cite{mao, mao1}  introduced and studied the concepts of $n$-phantom morphism and $n$-$\Ext$-phantom morphisms, where $n\geq 1$. Assume that $R$ is a left and right noetherian ring and $f:M\rt N$ is a morphism  in $\mathsf{Gp}(R)$, the category of all finitely generated Gorenstein projective right $R$-modules. By \cite[Theorem 1.2]{bs}, $\Tor_1^R(-, f)=0$ over $\mathsf{Gp}(R^{op})$ if and only if $\Ext^1_R(-, f)=0$ over $\mathsf{Gp}(R)$. So, over the category of Gorenstein projective modules, the notions of phantom morphisms and $\Ext$-phantom morphisms coincide. 
\end{s}

\begin{rem}\label{d}Assume that $R$ is a commutative Gorenstein local ring with $\mathsf{dim} R=d$. As mentioned in Example \ref{remexam}(1),  $\mathsf{mod} R$ is a $d$-Frobenius category. Suppose $f:M\rt N$ is a morphism in $\mathsf{mod}R$. By \cite[Proposition 2.11]{bs}, the natural transformation $\Ext^{d+1}_R(-, f)$ vanishes over $\mathsf{mod}R$ if and only if the same is true for $\Tor_{d+1}^R(-, f)$. So, $f$ is a $(d+1)$-phantom morphism if and only if it is a $(d+1)$-$\Ext$-phantom morphism in the sense of Mao \cite{mao}. In particular, in this case, $f$ is a $d$-$\Ext$-phantom morphism in our sense.
\end{rem}

The result below specifies $n$-$\Ext$-phantom and also $n$-$\Ext$-invertible morphisms in case $n=0$.
\begin{prop}\label{stable}Let $\C$ be a Frobenius category and $f:M\rt N$ a morphism in $\C$. Then the following assertions hold.
\begin{enumerate}\item  $f$ is { a  {\rm 0}-$\Ext$-phantom morphism if and only if} it factors through a projective object in $\C$. \item { $f$ is a  {\em 0}-$\Ext$-invertible  morphism if and only if} there are $P,Q\in\proj\C$ such that $M\oplus Q\st{{{\tiny {\left[\begin{array}{ll} f & g_1 \\ l & g_2 \end{array} \right]}}}}\lrt N\oplus P$ is an isomorphism in $\C$.
\end{enumerate}
\end{prop}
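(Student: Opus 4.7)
The plan begins by recording the fact (stated at the end of \ref{pp}) that for a Frobenius category the class of $\p$-conflations coincides with the morphisms factoring through a projective object. Granted this, the backward direction of (1) is immediate: if $f=\mu\nu$ with $\nu:M\to P$, $\mu:P\to N$ and $P$ projective, then for every $\phi:X\to M$ the composite $f\phi=\mu(\nu\phi)$ factors through $P$, so it lies in $\p(X,N)$ and the induced map $\hom(X,M)/\p\to\hom(X,N)/\p$ vanishes. For the forward direction I would simply evaluate the phantom condition at $X=M$ with $\phi=1_M$: the image $f\cdot 1_M=f$ must lie in $\p(M,N)$, which is the desired factorization through a projective.

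For the backward direction of (2), apply $\Ext^{1}(X,-)$ to the given isomorphism $F=\left[\begin{smallmatrix}f&g_1\\l&g_2\end{smallmatrix}\right]:M\oplus Q\to N\oplus P$. Since $P$ and $Q$ are projective, the Frobenius hypothesis makes them $0$-injective, so $\Ext^{1}(X,P)=\Ext^{1}(X,Q)=0$ for every $X\in\C$. The canonical identifications $\Ext^{1}(X,M\oplus Q)\cong\Ext^{1}(X,M)$ and $\Ext^{1}(X,N\oplus P)\cong\Ext^{1}(X,N)$ therefore collapse $\Ext^{1}(X,F)$ to $\Ext^{1}(X,f)$, which is thus an isomorphism for every $X$; by the remark following the definition of $n$-$\Ext$-invertible morphisms (using \cite[Corollary 6.9]{bfss}) this is exactly the $0$-$\Ext$-invertibility of $f$.

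For the forward direction of (2) I would use the equivalent description that $\hom(X,M)/\p\to\hom(X,N)/\p$ is an isomorphism for every $X$. Surjectivity at $X=N$ produces $g:N\to M$ with $1_N-fg\in\p(N,N)$; then $f(gf)=(fg)f\equiv f=f\cdot 1_M$ modulo $\p$, and injectivity at $X=M$ of the corresponding map forces $1_M-gf\in\p(M,M)$. Applying part (1), write $1_M-gf=\alpha\beta$ and $1_N-fg=\gamma\delta$ with $\beta:M\to P$, $\alpha:P\to M$, $\delta:N\to Q$, $\gamma:Q\to N$ and $P,Q$ projective. The candidate isomorphism is then
\[
F=\begin{bmatrix}f&\gamma\\ \beta&g_2\end{bmatrix}:M\oplus Q\longrightarrow N\oplus P,\qquad G=\begin{bmatrix}g&\alpha\\ \delta&h\end{bmatrix}:N\oplus P\longrightarrow M\oplus Q,
\]
with the auxiliary entries $g_2:Q\to P$ and $h:P\to Q$ still to be specified.

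The principal obstacle lies in this last step: the components $g_2$ and $h$ must be chosen so that $FG=1_{N\oplus P}$ and $GF=1_{M\oplus Q}$ hold on the nose in $\C$, not merely modulo $\p$. The $(1,1)$-entries are free from $gf+\alpha\beta=1_M$ and $fg+\gamma\delta=1_N$; the remaining entries require lifting $-g\gamma$ along $\alpha$ using the projectivity of $Q$ (to define $g_2$), extending $-\delta f$ along $\beta$ using the injectivity of $P$ (to define $h$), and then possibly modifying the factorizations $\alpha\beta,\gamma\delta$ to absorb the $(2,2)$ discrepancy $\delta\gamma-1_Q$. This is the classical Heller-type lifting manoeuvre, which uses the Frobenius coincidence of projectives and injectives in an essential way and is the only genuinely non-formal step of the argument.
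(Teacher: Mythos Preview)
Your treatment of part~(1) is correct and in fact slightly more streamlined than the paper's: you evaluate the phantom condition directly at $X=M$, $\phi=1_M$, whereas the paper passes through the characterisation $\Ext^1(f,-)=0$ and observes that the pulled-back syzygy sequence $\delta_N f$ splits. Both work; yours is the shorter route once one has the description of $\p$ for $n=0$ recorded at the end of~\ref{pp}. Your argument for the backward direction of~(2) is also fine.

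The genuine gap is in the forward direction of~(2), and it is more serious than your closing paragraph suggests. The matrix $F$ you propose, with undetermined entries $g_2,h$, must satisfy six equations simultaneously (the off-diagonal and $(2,2)$ entries of both $FG$ and $GF$). The lifts you sketch only secure some of them: for instance, lifting $-g\gamma$ along $\alpha$ via projectivity of $Q$ gives $\alpha g_2=-g\gamma$, but the companion condition $g_2\delta=-\beta g$ does not follow, since one only obtains $\alpha(g_2\delta+\beta g)=0$ and $\alpha$ need not be monic. The $(2,2)$ identities $\beta\alpha+g_2h=1_P$ and $\delta\gamma+hg_2=1_Q$ are harder still. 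Calling this ``the classical Heller-type manoeuvre'' does not discharge it; as written, the construction is incomplete.

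The paper avoids the matrix bookkeeping entirely. It fixes any inflation $l:M\to P$ with $P$ projective and invokes \cite[Lemma~3.3]{bfss}: because $f$ is $0$-$\Ext$-invertible, the map $[f\ l]^t:M\to N\oplus P$ is an inflation with projective cokernel $Q$. The resulting conflation $M\to N\oplus P\to Q$ splits (as $Q$ is projective), producing the isomorphism $M\oplus Q\cong N\oplus P$ with $f$ in the upper-left corner in one stroke. The structural lemma from \cite{bfss} is the missing ingredient; it replaces your ad~hoc matrix construction and makes the argument two lines long.
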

\begin{proof}Since the `if' parts are obvious, we shall prove just the `only if' parts.\\
(1) As $f:M\rt N$ is a 0-$\Ext$-phantom morphism, the natural tranformation $\Ext^1(f,-):\Ext^1(N, -)\lrt\Ext^1(M, -)$ vanishes. In particular, for a given syzygy sequence $\delta_N\in\Ext^1(N, \syz N)$, $\delta_Nf$ will be a split exact sequence, and then, $f$ factors through a projective object.\\ (2) Since $\C$ is a Frobenius category, one may take an inflation $M\st{l}\rt P$, where $P\in\proj\C$. {Also, as $f$ is a 0-$\Ext$-invertible morphism, by \cite[Lemma 3.3]{bfss}, one obtains a conflation $M\st{[f~~l]^t}\rt N\oplus P\rt Q$ in $\C$, where $Q\in\proj\C$. Thus this sequence is split, and then, we get the isomorphism $M\oplus Q\st{{{\tiny {\left[\begin{array}{ll} f & g_1 \\ l & g_2 \end{array} \right]}}}}\lrt N\oplus P$, as desired.}
\end{proof}

In the following, we remind the definition of the stable category of a Frobenius category.
\begin{s}\label{0}
Assume that $\C$ is a Frobenius category. The stable category $\c$ of $\C$ is defined as follows:
the objects of $\c$ are the same as those of $\C$, and the hom-set $\hom_{\c}(M, N)$ is the quotient $\underline{\hom}_{\C}(M, N)$ of the abelian group $\hom_{\C}(M, N)$ by the subgroup consisting of all morphisms $M\rt N$ factoring through projective objects. It is known that there is an additive quotient functor $F:\C\lrt\c$ which is the identity on objects and maps each morphism $f$ to its equivalence class $\bar{f}$.
\end{s}
In view of Proposition \ref{stable}(1), 0-$\Ext$-phantom morphisms are those, factoring through projective objects. This, combined with \cite[Theorem 1]{hz} yields the next result, which says that the functor $F$ admits a universal property.
\begin{theorem}\label{sta} With the notation above, consider the couple $(\c, F)$. The following assertions hold.\\ (1) For any $0$-$\Ext$-phantom  morphism $f$ in $\C$, $F(f)=0$ in $\c$.\\ (2) For any additive covariant functor $F':\C\rt\D$ that satisfies the condition (1), there is a unique additive functor $T:\c\rt\D$ making the following diagram commute
{\footnotesize\[\xymatrix{\C\ar[r]^{F}\ar[dr]_{F'}~& \c\ar[d]^{T}& \\ & \D.& }\]}
\end{theorem}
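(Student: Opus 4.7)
The plan is to reduce Theorem \ref{sta} to a routine quotient construction by invoking Proposition \ref{stable}(1), which identifies $0$-$\Ext$-phantom morphisms with those that factor through projective objects. Once this identification is in place, the theorem becomes essentially formal, and indeed this is precisely what the hint preceding the statement (``combined with \cite[Theorem 1]{hz}'') suggests.

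For part (1), I would argue directly. Given a $0$-$\Ext$-phantom morphism $f:M\rt N$, Proposition \ref{stable}(1) produces a factorization $f=hg$ with $g:M\rt P$, $h:P\rt N$, and $P$ projective in $\C$. By the construction of the stable category $\c$ recalled in \ref{0}, the quotient functor $F$ sends precisely such morphisms to zero, so $F(f)=0$ in $\c$.

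For part (2), assume $F':\C\rt\D$ is an additive covariant functor annihilating every $0$-$\Ext$-phantom morphism. I would define $T$ on objects by $T(M):=F'(M)$, which is unambiguous since $\c$ has the same objects as $\C$, and on morphisms by $T(\bar{f}):=F'(f)$ for any chosen representative $f$ of the equivalence class $\bar f$. The only delicate point is well-definedness: if $\bar{f}=\bar{g}$ in $\underline{\hom}_{\C}(M,N)$, then $f-g$ factors through a projective object, hence by Proposition \ref{stable}(1) is $0$-$\Ext$-phantom, so the hypothesis on $F'$ together with additivity gives $F'(f)-F'(g)=F'(f-g)=0$. Functoriality of $T$ (preservation of identities and compositions), additivity on hom-groups, and the identity $TF=F'$ then follow at once from the corresponding properties of $F'$ together with the fact that $F$ is the identity on objects and a surjective additive map on each hom-group.

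Uniqueness of $T$ is automatic by the same surjectivity: any additive functor $T:\c\rt\D$ satisfying $TF=F'$ is forced to agree with the definition above on both objects and morphisms. I do not anticipate a real obstacle here; the substantive content of the universal property was already packaged into Proposition \ref{stable}(1), so the remainder is the standard verification for an additive quotient category.
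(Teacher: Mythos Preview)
Your proposal is correct and matches the paper's approach exactly: the paper does not give a written proof but simply remarks that the result follows from Proposition~\ref{stable}(1) combined with \cite[Theorem~1]{hz}, and your argument spells out precisely this standard quotient-category verification.
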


As mentioned in the introduction,  the universal property of the stable category of a Frobenius category, when regarded as a 0-Frobenius category, prompts the authors  to define the stabilization of $n$-Frobenius categories, as follows.

 \begin{dfn}\label{dd}(\cite[Definition 9.2]{bfss})  By the phantom stable category of $\C$, we mean an additive category $\C_{\p}$, equipped with an  additive covariant functor  $T:\C\lrt\C_{\p}$   such that \\ (1) for any $n$-$\Ext$-phantom morphism $\varphi$, $T(\varphi)=0$ in $\C_{\p}$;\\ (2)  $T(s)$ is an isomorphism in $\C_{\p}$, for any $n$-$\Ext$-invertible morphism $s$;\\ (3) any additive covariant functor $T':\C\lrt\D$ satisfying the conditions (1) and (2), factors in a unique way through $T$.
\end{dfn}

\begin{rem}\label{0frob}
Assume that $\C$ is a Frobenius category. Then  $(\underline{\C}, F)$ is the phantom stable category of $\C$. To see this, considering Theorem \ref{sta}, we only need to show that the second condition of Definition \ref{dd}, automatically holds for this case. So assume that  $f:M\rt N$ is  a $0$-$\Ext$-invertible morphism in $\C$. By  Proposition \ref{stable}(2), there is an isomorphism $f':M\oplus Q\st{{{\tiny {\left[\begin{array}{ll} f & g_1 \\ l & g_2 \end{array} \right]}}}}\lrt N\oplus P$ in $\C$, where $P, Q\in\proj\C$, implying that $F(f)=F(f')$. This, in particular, yields that $F(f)$ is an isomorphism in $\underline{\C}$, as claimed.
\end{rem}

\begin{s}Let $a:X\rt X'$ be an $n$-$\Ext$-invertible morphism and $\ga\in\Ext^n(X, Y)$. In view of \cite[Corollary 4.10]{bfss}, there exists $\ga'\in\Ext^n(X', Y)$ such that $\ga-\ga'a$ is a $\p$-conflation. In this case, for simplicity, we write $\ga'=_{_{\p}}\ga a^{-1}$. Also, {for a given object} $\be\in\Ext^n(Y, X')$, there exists $\be'\in\Ext^n(Y, X)$ such that $\be -a\be'$ is a $\p$-conflation. Then we write $\be'=_{_{\p}}a^{-1}\be$.
\end{s}

 It should be highlighted that the existence of the phantom stable category $(\C_{\p}, T)$ of an $n$-Frobenius category  $\C$, which we usually suppress $T$, has been proved in \cite[Theorem 9.5]{bfss}. Here, we only outline the structure of this category.

\begin{s}{\sc Structure of $(\C_{\p}, T)$.}\label{co} The phantom stable category $(\C_{\p}, T)$ of $\C$, is constructed as follows:
\begin{itemize}\item The objects of $\C_{\p}$ are the same as those of $\C$.
\item  For any pair of objects $M, N\in\C_{\p}$, fix a unit conflation $\delta_N\in\Ext^n(N, \syz^n N)$ and set $\hom_{\C_{\p}}(M, N):=(\Ext^n(M, \syz^n N)/{\p}, \delta_N)$ modulo the relation $``\thicksim"$.\\
 For given two objects $(\bar{\ga}, \delta_N)$ and $(\bar{\ga'}, \delta'_N)$ in $\hom_{\C_{\p}}(M, N)$, we write $(\bar{\ga}, \delta_N)\thicksim(\bar{\ga'}, \delta'_N)$, if there exists an angled pair $[a\delta_N, b\delta'_N]$ such that
$a\ga-b\ga'$ is a $\p$-conflation. According to \cite[Theorem 8.2]{bfss}, $``\thicksim"$ is an equivalence relation on $\bigcup_{ \delta_N\in\U_n(N)}(\Ext^n(M, \syz^n N)/{\p}, \delta_N)$.
\item
 Composition of morphisms is defined as follows:\\
For two morphisms $(\bar{\ga}, \delta_N)\in\hom_{\C_{\p}}(M, N)$ and $(\bar{\be}, \delta_K)\in\hom_{\C_{\p}}(N, K)$, take an $\ruf$ $\ga=\delta f$ of $\ga$ and set $(\bar{\be}, \delta_K)\circ(\bar{\ga}, \delta_N):=(\overline{((\be a)b^{-1})f}, \delta_K)$, where $\delta_N\st{a}\llf\delta_1\st{b}\lrt\delta$ is a co-angled pair. Well-definedness of this definition comes from
\cite[Lemma 7.3]{bfss} and \cite[Theorem 7.4]{bfss}. Furthermore, the equivalence relation is compatible with the composition, thanks to  \cite[Proposition 8.6]{bfss}. 
\item  $T:\C\rt\C_{\p}$ is an additive covariant functor that is identical on objects and for a given morphism $f: M\rt N$ in $\C$,  $T(f)$ is defined by first fixing a unit conflation $\delta_N\in\Ext^n(N, \syz^nN)$ and setting $T(f)=(\overline{\delta_Nf}, \delta_N)$.
\end{itemize}
It is easily seen that, for  any object $M\in\C_{\p}$, $1_M= (\bar{\delta}_M, \delta_M)$.
In addition,  \cite[Proposition 7.9]{bfss} ensures that  $``\circ"$ is associative, and also it is bilinear. Thus, $\C_{\p}$ will be an additive category.
\end{s}
Let us give explicit definition of the addition in  $\C_{\p}$. Assume that $(\bar{\ga}, \delta_N)$ and $(\bar{\ga'}, \delta'_N)$ are two objects of $\hom_{\C_{\p}}(M, N)$. Now define the addition as follows:
$$(\bar{\ga}, \delta_N)+(\bar{\ga'}, \delta'_N):=(\overline{a\ga+b\ga'}, \delta''_N),$$ where  $\delta_N\st{a}\lrt\delta''_N\st{b}\llf\delta'_N$ is an angled pair and $a\ga+b\ga'$ stands for the usual Baer sum.

\begin{prop}The addition $``+"$ is well-defined.
\end{prop}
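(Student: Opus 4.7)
The plan is to split well-definedness into two independent checks, via a transport principle. The transport principle reads: for any inflation $a$ with $n$-projective cokernel such that $a\delta_N\in\U_n(N)$, one has $(\bar{\ga},\delta_N)\sim(\overline{a\ga},a\delta_N)$, witnessed by the trivial angled pair $[a\delta_N,1\cdot a\delta_N]$ whose associated difference $a\ga-a\ga$ is zero, hence in $\p$. Accordingly, the defined sum $(\overline{a\ga+b\ga'},\delta''_N)$ is precisely the Baer sum, inside $(\Ext^n(M,\syz^n N)/\p,\delta''_N)$, of the transports of $(\bar{\ga},\delta_N)$ and $(\bar{\ga'},\delta'_N)$ into this common realization. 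Since $\p$ is a subgroup of $\Ext^n$ (\cite[Proposition 4.3]{bfss}), the Baer sum descends to the quotient, so well-definedness reduces to (i) independence of the angled pair $[a\delta_N,b\delta'_N]$, and (ii) independence of representatives of the two operand classes.

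For (i), given two angled pairs $[a_1\delta_N,b_1\delta'_N]=\delta^{(1)}_N$ and $[a_2\delta_N,b_2\delta'_N]=\delta^{(2)}_N$, I would invoke the dual of \cite[Proposition 5.5]{bfss} to form a further angled pair $[c\delta^{(1)}_N,d\delta^{(2)}_N]=\delta^{(3)}_N$. Applying the transport principle on each side, the two candidate sums become the classes of $ca_1\ga+cb_1\ga'$ and $da_2\ga+db_2\ga'$ realized by $\delta^{(3)}_N$. Because $ca_1\delta_N=da_2\delta_N=\delta^{(3)}_N$ (and similarly $cb_1\delta'_N=db_2\delta'_N$), the transport principle applied twice through $(\bar{\ga},\delta_N)$ yields $(\overline{ca_1\ga},\delta^{(3)}_N)\sim(\overline{da_2\ga},\delta^{(3)}_N)$, and analogously for $\ga'$; combining these via bilinearity of Baer sum over $\delta^{(3)}_N$ and transitivity of $\sim$ delivers the desired equivalence between the two sums.

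For (ii), by transitivity of $\sim$ it suffices to vary one operand at a time. Suppose $(\bar{\ga},\delta_N)\sim(\bar{\sigma},\xi_N)$ is witnessed by an angled pair $[u\delta_N,v\xi_N]=\zeta_N$ with $u\ga-v\sigma\in\p$, and fix $(\bar{\ga'},\delta'_N)$. Pick any angled pair $[p\zeta_N,q\delta'_N]=\delta''_N$; then $[pu\delta_N,q\delta'_N]$ and $[pv\xi_N,q\delta'_N]$ are both angled pairs producing $\delta''_N$, so by (i) either may be used in computing the sum. The resulting representatives, $pu\ga+q\ga'$ and $pv\sigma+q\ga'$, differ by $p(u\ga-v\sigma)$, which lies in $\p$ by push-out stability of $\p$. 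Hence the two sum-classes agree.

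The main obstacle I anticipate lies in (i): making the transitivity argument fully rigorous, since the iterated $\sim$-equivalences realized by $\delta^{(3)}_N$ need not be witnessed by a single angled pair, so one must repeatedly pass to common refinements of the witnessing unit conflations before being able to combine the summands. This is exactly the same phenomenon that \cite[Lemma 7.3]{bfss} and \cite[Proposition 8.6]{bfss} address for the composition $\circ$, and the same diagrammatic technique---exploiting the $n$-projective terms of unit conflations together with the characterization of $\p$ in \cite[Proposition 4.3]{bfss}---should carry over verbatim to the additive setting here.
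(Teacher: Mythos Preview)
Your proposal is essentially correct, and part (ii) is virtually identical to the paper's argument. The genuine difference is in (i).

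For independence of the angled pair, the paper does not pass to a common refinement $\delta^{(3)}_N$ of two arbitrary angled pairs. Instead it fixes the particular angled pair $\delta_N\st{a_1}\lrt\delta_1\st{b_1}\llf\delta'_N$ produced by \cite[Proposition~5.5(2)]{bfss} and shows, via \cite[Proposition~5.8]{bfss} and the dual of \cite[Proposition~5.7]{bfss}, that any other angled pair $\delta_N\st{a}\lrt\delta''_N\st{b}\llf\delta'_N$ satisfies $\delta''_N=h\delta_1$ for some $n$-$\Ext$-invertible morphism $h$, together with a commutative triangle giving $a=ha_1$ and $b=hb_1$. This yields the \emph{equality} $a\ga+b\ga'=h(a_1\ga+b_1\ga')$ on the nose, and a single application of your transport principle finishes. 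The payoff is that no transitivity or ``combining $\sim$-equivalences over a common realization'' is ever needed: the obstacle you flag in your final paragraph simply does not arise.

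Your route, by contrast, reduces (i) to the statement that $(\overline{\alpha},\delta)\sim(\overline{\beta},\delta)$ with the \emph{same} realization forces $\overline{\alpha}=\overline{\beta}$ (so that the Baer sum inside a fixed realization respects $\sim$). This is true and does follow from the machinery in \cite{bfss}, but it is an extra lemma you would have to prove, whereas the paper's factorization argument bypasses it entirely. So both approaches succeed; the paper's is shorter and more direct, while yours is more symmetric but incurs the additional verification you anticipated.
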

\begin{proof}We need to show that the definition of  $``+"$ is independent of the choice of angled pairs and the equivalence relation $``\thicksim"$ is compatible with $``+"$. In order to verify the first statement, assume that $\delta_N\st{a_1}\lrt\delta_1\st{b_1}\llf\delta'_N$ is the angled pair appeared in \cite[Proposition 5.5(2)]{bfss}. Since  $\delta_N\st{a}\lrt\delta''_N\st{b}\llf\delta'_N$ is an angled pair, the equalities $\delta''_N=a\delta_N=b\delta'_N$ hold. This, combined with \cite[Proposition 5.8]{bfss} yields that $\delta''_N=h\delta_1$, for some $n$-$\Ext$-invertible morphism $h$ in $\C$. A dual  argument given in the proof of \cite[Proposition 5.7]{bfss} and get  the commutative diagram
{\footnotesize\[\xymatrix{\syz^nN\ar[r]^{a_1}\ar[dr]_{a}~& \syz^n_1N\ar[d]_{h}& {\syz'}^nN\ar[l]_{b_1}\ar[dl]_{b}\\ & {\syz''}^nN& }\]}where $\syz^nN, {\syz'}^nN, {\syz''}^nN$ and $\syz^n_1N$ denote the left end terms of $\delta_N, \delta'_N, \delta''_N$ and $\delta_1$, respectively. This leads to the equalities ${a\ga+\b\ga'}={ha_1\ga+hb_1\ga'}={h(a_1\ga+b_1\ga')}$. Now since $h$ is an $n$-$\Ext$-invertible morphism, we infer that $({\overline{a\ga+b\ga'}}, \delta''_N)=(\overline{a_1\ga+b_1\ga'}, \delta_1)$ that ensures the validity of the first assertion. To deal with the second statement, take the  morphisms $(\bar{\ga}, \delta_N), (\bar{\be}, \delta'_N)$ and $(\bar{\be'}, \delta''_N)$ in $\hom_{\C_{\p}}(M, N)$ such that $(\bar{\be}, \delta'_N)=(\bar{\be'}, \delta''_N)$. It must be proved that  $(\bar{\ga}, \delta_N)+ (\bar{\be}, \delta'_N)= (\bar{\ga}, \delta_N)+ (\bar{\be'}, \delta''_N)$. By the hypothesis, $\overline{a\be}=\overline{b\be'}$, where $\delta'_N\st{a}\lrt\delta_1\st{b}\llf\delta''_N$ is an angled pair. So considering an angled pair $\delta_N\st{a_1}\lrt\delta_2\st{b_1}\llf\delta_1$ gives angled pairs $[a_1\delta_N, b_1a\delta'_N]$ and $[a_1\delta_N, b_1b\delta''_N]$. As $\overline{a\be}=\overline{b\be'}$, we have $\overline{b_1a\be}=\overline{b_1b\be'}$, and so, $\overline{a_1\ga+b_1a\be}=\overline{a_1\ga+b_1b\be'}$. Thus the proof is complete.
\end{proof}

 \begin{rem}Since $\C$ is an $n$-Frobenius category, as mentioned  before, it is a $k$-Frobenius category, for all integers  $k> n$. It is proved in \cite[Corollary 9.8]{bfss} that its phantom stable categories as  $n$-Frobenius and  $k$-Frobenius categories, are equivalent.
\end{rem}
 
We close this section with the following remark.
\begin{rem}It is worth  pointing-out that the first and third authors, in an upcoming paper \cite{bs1}, study the natural transformations between extension functors on $\C$. Indeed, they have observed that for any  pair of objects $M, N\in\C$, there exist isomorphisms of abelian groups
\[\begin{array}{lllll}
\Ext^n(M, \syz^nN)/{\p} & \cong \nat(\Ext^{n+1}(-, M), \Ext^{n+1}(-, N))\\
& \cong \nat(\Ext^{n+1}(N, -), \Ext^{n+1}(M, -)),
\end{array}\]
where  $\nat(\Ext^{n+1}(-, M), \Ext^{n+1}(-, N))$ stands for the group of all  natural transformations from $\Ext^{n+1}(-, M)$ to $\Ext^{n+1}(-, N)$. Based on this observation,  the hom-set $\hom_{\C_{\p}}(M, N)$ can be taken as $\nat(\Ext^{n+1}(-, M), \Ext^{n+1}(-, N))$, and in particular, the addition and composition of morphisms corresponds to the usual addition and composition of natural transformations. Moreover, $T:\C\lrt\C_{\p}$ becomes a covariant additive functor, which is identity on objects, and for a given morphism $f$ in $\C$, $T(f)=\Ext^{n+1}(-, f)$. 

 It is also worthwhile to emphasize that this result can be viewed as  a far-reaching generalization of the Hilton-Rees theorem, as the case $n=0$, recovers the Hilton-Rees theorem, which says that if $\C$ is a Frobenius category, then for any two objects $M, N\in\C$, the isomorphisms $\uhom_{\C}(M, N)\cong\nat(\Ext^1(N, -), \Ext^1(M, -))\cong\nat(\Ext^1(-, M), \Ext^1(-, N))$ hold true, see \cite[Theorems 7 and 8]{mart}.
\end{rem}

 \section{The syzygy functor as an auto-equivalence}
The aim of this section is to show that, similar to the stable category of a Frobenius category,  the syzygy functor is an auto-equivalence functor on $\C_{\p}$. First, we remind the case for Frobenius categories.

\begin{rem}\label{rrr}Suppose that $\C$ is a  Frobenius category. For a given object $M\in\C$, one may assign an object $\syz M$ which is arisen from a syzygy sequence $\syz M\rat P\re M$, where $P\in\proj\C$. Moreover, for a given morphism $f:M\rt N$ in $\C$, one may obtain a commutative diagram with exact rows {\footnotesize \[\xymatrix{\syz M\ar@{>->}[r]\ar[d]_{\syz f}& \ \ P\ar@{->>}[r]\ar[d] \ \ & M\ar[d]_{f}\\ \syz N\ar@{>->}[r]& \ \ Q\ar@{->>}[r]\ \ & N,}\]}where $P,Q\in\proj\C$.  It is known that $\syz:\C\rt\c$ is an additive covariant functor. By making use of the universal property of the stable category $\c$, which has appeared in Theorem \ref{sta}, we show that the functor $\syz$ induces an auto-equivalence functor on $\c$.
\end{rem}

\begin{prop}\label{induce}Keeping the notation of Remark \ref{rrr}, the functor $\syz:\C\rt\c$ induces an auto-equivalence functor on $\c$.
\end{prop}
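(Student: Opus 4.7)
First, I would verify that $\syz\colon\C\rt\c$ is a well-defined additive covariant functor. Fix for each object $M$ a syzygy sequence $\syz M\rt P_M\rt M$ with $P_M\in\proj\C$. For a morphism $f\colon M\rt N$ in $\C$, projectivity of $P_M$ provides a lift of $P_M\to M\st{f}\to N$ along the deflation $P_N\to N$; the induced morphism between kernels gives a map $\syz f\colon\syz M\rt\syz N$. Two lifts differ by a map $P_M\to P_N$ with zero composite to $N$, which factors through $\syz N\rt P_N$; the resulting indeterminacy in $\syz f$ factors through $P_N$ after a short diagram chase, and hence vanishes in $\c$. Additivity and preservation of identities are routine.

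The decisive step is to show that $\syz\colon\C\rt\c$ sends every $0$-$\Ext$-phantom morphism to zero. By Proposition~\ref{stable}(1), such morphisms are exactly those factoring through a projective object. Assume $f=g\circ h$ with $h\colon M\rt P'$, $g\colon P'\rt N$ and $P'\in\proj\C$. Pick a syzygy sequence $\syz P'\rt R\rt P'$; since $P'$ is projective this sequence splits, so $R\cong\syz P'\oplus P'$ and in particular $\syz P'\in\proj\C$. Using functoriality of $\syz$ in $\c$ established above, we obtain $\syz f=\syz g\circ\syz h$ in $\c$, and this composite factors through the projective object $\syz P'$. Hence $\syz f=0$ in $\c$.

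Having established these two properties, Theorem~\ref{sta} applied to $\syz\colon\C\rt\c$ furnishes a unique additive functor $\Syz\colon\c\rt\c$ with $\Syz\circ F=\syz$, where $F\colon\C\rt\c$ is the canonical quotient. Since $\C$ is Frobenius, $\proj\C=\inj\C$, and every object $M$ also fits into a cosyzygy sequence $M\rt I_M\rt\syz^{-1}M$ with $I_M$ injective. Reading all arguments above in their dual form yields an additive functor $\Syz^{-1}\colon\c\rt\c$ by the same route.

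Finally, I would construct natural isomorphisms $\Syz\circ\Syz^{-1}\cong\id_{\c}\cong\Syz^{-1}\circ\Syz$. For each object $M$, the chosen cosyzygy sequence $M\rt I_M\rt\syz^{-1}M$ is itself a valid syzygy sequence for $\syz^{-1}M$, so with respect to this choice $\syz\syz^{-1}M=M$ literally, while any other choice of syzygy is isomorphic to $M$ in $\c$ because two syzygies of the same object differ by projective summands. Naturality amounts to comparing, for $f\colon M\rt N$, the square assembled from the chosen co- and syzygy sequences of $M$ and $N$ with the canonical one, which once more reduces to the uniqueness-up-to-projective-factorization of lifts exploited in the first paragraph. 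I expect this naturality check to be the main technical hurdle: each individual step is a standard projective-lifting diagram chase, but one has to track carefully that the several non-canonical lifting choices cancel after projection to $\c$. This is exactly the reason the statement is forced to live in $\c$ rather than in $\C$.
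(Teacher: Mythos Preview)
Your proposal is correct and follows essentially the same route as the paper: verify that $\syz$ kills $0$-$\Ext$-phantoms (via Proposition~\ref{stable}(1)), invoke the universal property of Theorem~\ref{sta} to obtain $\Syz\colon\c\to\c$, and then argue it is an auto-equivalence. The only differences are cosmetic: the paper additionally checks that $\syz$ sends $0$-$\Ext$-invertibles to isomorphisms (not needed for Theorem~\ref{sta}, but paralleling the later $n$-Frobenius argument) and then dismisses the auto-equivalence verification with ``it is easily seen'', whereas you spell out the quasi-inverse via cosyzygies and the naturality check---your extra detail is welcome and entirely standard.
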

\begin{proof}Assume that $f:M\rt N$ is a 0-$\Ext$-invertible morphism in $\C$. In view of Proposition \ref{stable}(2), there is an isomorphism  $M\oplus Q\st{{{\tiny h={\left[\begin{array}{ll} f & g_1 \\ l & g_2 \end{array} \right]}}}}\lrt N\oplus P$ in $\C$, where $P, Q\in\proj\C$. So, $\syz h$ is an isomorphism in $\c$. This implies that $\syz f$ is also an isomorphism, because $\syz h=\syz f$. Next, suppose that $f$ is a 0-$\Ext$-phantom morphism. By the first assertion of Proposition \ref{stable}, $f$ factors through a projective object of $\C$, say $P$. Now since $\syz P=0$, one deduces that $\syz f=0$. Thus, by Theorem \ref{sta}(2), there is a unique additive covariant functor $\Syz:\c\rt\c$ such that $\Syz F=\syz$. In particular, for a given object $M\in\c$, $\Syz M=\syz M$ and for a given morphism $\bar{f}\in\hom_{\c}(M, N)$, $\Syz \bar{f}={\syz f}$. It is easily seen that $\Syz$ is an auto-equivalence functor on $\c$, and so, the proof is finished.
\end{proof}

In the following, we will observe that, analogous to the stable category of a Frobenius category, the syzygy functor $\syz$, using $n$-projective objects, provides an additive functor from $\C$ to $\C_{\p}$, as follows: For given two objects $M, N\in\C$, consider syzygy sequences $\delta_M:\syz M\rat P\re M$ and $\delta_N:\syz N\rat Q\re N$, and define a map $\syz_{M, N}:\hom_{\C}(M, N)\lrt\hom_{\C_{\p}}(\syz M, \syz N)$ in the following way. At first, assume that $\hom_{\C_{\p}}(\syz M, \syz N)$ is realized by $\delta_{\syz N}:\syz^{n+1}N\rat Q_n\rt\cdots\rt Q_1\re\syz N$ and $f\in\hom_{\C}(M, N)$ is given. As stated in \ref{ss}(4), there exists $\delta'_M\in\U_1(M)$ and a morphism $f':\syz'M\rt\syz N$ in $\C$ such that $f'\delta'_M=\delta_Nf$.
Now  we define $\syz(f):=(\overline{((\delta_{\syz N}f')b_1^{-1})a_1}, \delta_{\syz N})$, where  $\delta_M\st{a_1}\rt\delta_{1}\st{b_1}\lf\delta'_M$ is an angled pair. The result below asserts that $\syz:\C\lrt\C_{\p}$ is an additive covariant functor.

\begin{theorem}\label{fun}With the notation above, $\syz:\C\lrt\C_{\p}$ is a covariant additive functor.
\end{theorem}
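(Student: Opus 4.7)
The plan is to verify, in order, (i) well-definedness of the assignment $f \mapsto \syz(f)$, (ii) preservation of identities, (iii) respect for composition, and (iv) additivity of each map $\syz_{M,N}$; the bulk of the work lies in (i) and (iii). First I observe that the two morphisms $a_1$ and $b_1$ in the defining angled pair $\delta_M \st{a_1}\rt\delta_1\st{b_1}\lf\delta'_M$ are inflations with $n$-projective cokernels, so each has zero kernel and $n$-projective cokernel and is therefore $n$-$\Ext$-invertible by \cite[Corollary~3.6]{bfss}. The formal symbol $b_1^{-1}$, as recalled just before \ref{co}, then acts legitimately on $\Ext^n/{\p}$, so $\overline{((\delta_{\syz N} f') b_1^{-1}) a_1}$ is a genuine class in $\Ext^n(\syz M,\syz^{n+1}N)/{\p}$ whose equivalence class modulo $\thicksim$ against $\delta_{\syz N}$ represents a morphism in $\hom_{\C_{\p}}(\syz M, \syz N)$.

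Well-definedness splits into three independence checks. (a) If $(\delta''_M, f'')$ is another pair satisfying $f''\delta''_M = \delta_N f$, I would refine $\delta'_M$ and $\delta''_M$ through a common angled intermediary via \cite[Proposition~5.5(2)]{bfss}, then use \cite[Theorem~8.2]{bfss} to conclude that the two candidate representatives differ by a $\p$-conflation and so are equivalent under $\thicksim$. (b) Independence of the chosen angled pair is handled exactly as in the preceding proposition on the well-definedness of $+$: any two angled pairs based at $(\delta_M,\delta'_M)$ are connected through a common refinement by an $n$-$\Ext$-invertible intertwiner $h$, and by \cite[Proposition~5.8]{bfss} this $h$ identifies the candidate representatives modulo $\p$. (c) Independence of the choice implicit in $b_1^{-1}$ is built directly into the congruence $=_{_{\p}}$.

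For (ii), taking $\delta'_M = \delta_M$, $f' = 1_{\syz M}$, and the trivial angled pair $a_1 = b_1 = 1_{\syz M}$ yields $\syz(1_M) = (\overline{\delta_{\syz M}}, \delta_{\syz M}) = 1_{\syz M}$. For (iii), given $f:M\to N$ and $g:N\to K$, I would produce lifts $f':\syz' M \to \syz N$ and $g':\syz' N \to \syz K$ across $\delta_N$ and $\delta_K$ respectively, together with a further lift of $f'$ against $\delta'_N$, and then unwind the composition formula of \ref{co} (which uses a co-angled pair and an $\ruf$ of the first factor) on one side, and the defining formula of $\syz$ on the other, showing that the two $\Ext^n/{\p}$-representatives agree on a common refinement. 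Additivity (iv) follows from bilinearity of the Baer sum: choosing a single angled pair that refines those used for $f_1$ and $f_2$ allows the $\Ext^n$-level addition to match exactly the addition formula in $\C_{\p}$ displayed just before the theorem.

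The main obstacle I anticipate is the composition step. The recipe defining $\syz$ uses an angled pair and a left-side lift of the source syzygy, whereas composition in $\C_{\p}$ uses a co-angled pair and an $\ruf$ of the first argument; the two prescriptions therefore live on genuinely different diagrams and must be aligned before they can be compared. The alignment will require iterating \cite[Propositions~5.5, 5.7, 5.8 and Theorem~7.4]{bfss}, using at every stage that the intertwiners naturally produced have $n$-projective kernels and cokernels and are thus $n$-$\Ext$-invertible, so that the Baer-like expressions on the two sides ultimately become comparable under $\thicksim$.
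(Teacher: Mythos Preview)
Your outline has the right four-part structure, and parts (ii) and (iii) are essentially what the paper does, but there is a genuine gap in (i)(a) and a missed simplification in (iv).

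\textbf{The gap in well-definedness.} Your step (a) says: refine $\delta'_M$ and $\delta''_M$ through a common angled intermediary via \cite[Proposition~5.5(2)]{bfss}, then invoke \cite[Theorem~8.2]{bfss} to conclude the two representatives differ by a $\p$-conflation. But Theorem~8.2 only asserts that $\thicksim$ is an equivalence relation; it does not establish any particular instance of equivalence, so it cannot do the work you assign it. What is actually needed, and what the paper supplies, is a single morphism $h:\syz_3 M\to\syz N$ satisfying $ha_3=f'$ and $hb_3=f''$, where $a_3,b_3$ are the inflations of the common refinement. The paper obtains $h$ by a dual of the argument in \cite[Proposition~5.8]{bfss}, using that $f'\delta'_M=\delta_Nf=f''\delta''_M$. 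Only once both $f'$ and $f''$ factor through the same $h$ can one run the chain of $=_{\p}$ equalities that identifies $((\delta_{\syz N}f')b_1^{-1})a_1$ with $((\delta_{\syz N}f'')b_2^{-1})a_2$. Without producing $h$, the angled refinement alone gives you nothing to compare.

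\textbf{Two simplifications you miss.} For composition, the paper avoids the alignment difficulty you anticipate by lifting $f$ against $\delta'_N$ (the unit conflation already produced when lifting $g$) rather than against $\delta_N$; then $g'f'$ is directly a lift of $gf$ against $\delta_K$, and the composition formula from \ref{co} collapses in two lines to $(\overline{(\delta_{\syz K}g'f')b^{-1}a},\delta_{\syz K})$, which is $\syz(gf)$ by the well-definedness already established. No iteration of Propositions~5.5--5.8 is needed. For additivity, the paper does not touch Baer sums at all: since $\syz$ visibly preserves finite direct sums and both $\C$ and $\C_{\p}$ are additive, additivity of $\syz$ is automatic.
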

\begin{proof} First we show that $\syz$ is well-defined. To see this, suppose that there is $\delta''_M\in\U_1(M)$ and a morphism $f'':\syz''M\rt\syz N$ with $f''\delta''_M=\delta_Nf$. So, taking an angled pair $\delta_M\st{a_2}\rt\delta_{2}\st{b_2}\lf\delta''_M$, one may have $\syz(f)=(\overline{((\delta_{\syz N}f'')b_2^{-1})a_2}, \delta_{\syz N})$. We shall prove that $(\overline{((\delta_{\syz N}f')b_1^{-1})a_1}, \delta_{\syz N})=(\overline{((\delta_{\syz N}f'')b_2^{-1})a_2}, \delta_{\syz N})$. A dual argument given in the proof of \cite[Theorem 7.4]{bfss} and applying \cite[Proposition 5.5]{bfss}, would give us the following commutative diagram in $\C$: {\footnotesize \[\xymatrix{&\syz M\ar[dl]_{a_1}\ar[dr]^{a_2}& \\ \syz_1M\ar[r]^{a_4}~& \syz_4M& \syz_2M\ar[l]_{b_4}\\ {\syz'}M\ar[u]^{b_1}\ar[r]^{a_3}~ & \syz_3 M\ar[u]^{c_4} & {\syz''}M\ar[u]^{b_2}\ar[l]_{b_3},}\]}where $a_3,b_3, a_4, b_4, c_4$ are $n$-$\Ext$-invertible morphisms.
Now similar to \cite[Proposition 5.8]{bfss}, one may find a morphism $h:\syz_3 M\rt\syz N$ such that $ha_3=f'$ and $hb_3=f''$. Namely, we have the  commutative diagram in $\C$ {\footnotesize\[\xymatrix{{\syz'}M\ar[r]^{a_3}\ar[dr]_{f'}~& \syz_3M\ar[d]^{h}& {\syz''}M\ar[l]_{b_3}\ar[dl]^{f''}\\ & \syz N.& }\]}
Consequently, one may get equalities modulo $\p$:
\[\begin{array}{lllll}((\delta_{\syz N}f')b_1^{-1})a_1& = _{\p}(((\delta_{\syz N}h)a_3) b_1^{-1})a_1\\
&=_{\p}((((\delta_{\syz N}f'')b_3^{-1})a_3)b_1^{-1})a_1\\
&=_{\p}((((\delta_{\syz N}f'')b_3^{-1})c_4^{-1})a_4)a_1\\ & =_{\p}((((\delta_{\syz N}f'')b_3^{-1})c_4^{-1})b_4)a_2\\ &=_{\p}((\delta_{\syz N}f'')b_2^{-1})a_2,
\end{array}\] meaning that $(\overline{((\delta_{\syz N}f')b_1^{-1})a_1}, \delta_{\syz N})=(\overline{((\delta_{\syz N}f'')b_2^{-1})a_2)}, \delta_{\syz N})$, giving the claim.\\ Next we show that $\syz$ is a covariant additive functor.
It is clear from the definition that $\syz(1_M)=(\bar{\delta}_{\syz M}, \delta_{\syz M})=1_{\syz M}$. Also, if $M\st{f}\rt N\st{g}\rt K$ is a pair of composable morphisms in $\C$, then we must show that $\syz(gf)=\syz(g)\syz(f)$. In view of \ref{ss}(4), there exist unit conflations $\delta'_N\in\Ext^1(N, \syz'N)$, $\delta'_M\in\Ext^1(M, \syz'M)$ and morphisms $g':\syz'N\rt\syz K$, $f':\syz'M\rt\syz'N$ such that $\delta_Kg=g'\delta'_N$ and $\delta'_Nf=f'\delta'_M$.
By the definition,
$\syz(g)=(\overline{((\delta_{\syz K}g')b_1^{-1})a_1}, \delta_{\syz K})$ and $\syz(f)=(\overline{((\delta_{\syz N}a_1^{-1})b_1 f'){b}^{-1}a}, \delta_{\syz N})$, where $[a_1\delta_N, b_1\delta'_N]$ and $[a\delta_M, b\delta'_M]$ are angled pairs. Thus, one may derive the equalities
\[\begin{array}{lllll}
\syz(g)\circ\syz(f)&=
((\overline{((\delta_{\syz K}g')b_1^{-1})a_1)a_1^{-1}b_1f'b^{-1}a}, \delta_{\syz K})\\
&=(\overline{(\delta_{\syz K}g'f')b^{-1}a}, \delta_{\syz K}).
\end{array}\]
Well-definedness of $\syz$ would imply that the latter is equal to $\syz(gf)$, and so, $\syz$ is a covariant functor. It should be noted that since $\C$ and $\C_{\p}$ are additive categories and $\syz$ preserves finite direct sums, it will be an additive functor. So the proof is finished.
\end{proof}

\begin{theorem}\label{fun1}The functor $\syz:\C\rt\C_{\p}$ induces an additive covariant endofunctor $\Syz$ on $\C_{\p}$.
\end{theorem}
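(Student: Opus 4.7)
The plan is to apply the universal property of the phantom stable category (Definition \ref{dd}(3)) directly to the additive functor $\syz:\C\rt\C_{\p}$ supplied by Theorem \ref{fun}. Concretely, it suffices to verify that $\syz$ sends every $n$-$\Ext$-phantom morphism to zero and every $n$-$\Ext$-invertible morphism to an isomorphism in $\C_{\p}$; the universal property then produces a unique additive covariant functor $\Syz:\C_{\p}\rt\C_{\p}$ with $\Syz\circ T=\syz$, which is the desired endofunctor.

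For the phantom condition, the key step is to transfer the phantom property from $\varphi:M\rt N$ to the induced morphism $\varphi':\syz'M\rt\syz N$ coming from the left unit factorization $\delta_N\varphi=\varphi'\delta'_M$. I would exploit the long exact $\Ext$-sequences attached to the syzygy conflations $\delta_N$ and $\delta'_M$: since their middle terms are $n$-projective and $n$-$\proj\C=n$-$\inj\C$, the groups $\Ext^i(-,Q)$ vanish for $i>n$ on these middle terms, yielding natural identifications $\Ext^{n+1}(-,N)\cong\Ext^n(-,\syz N)/\p$ and $\Ext^{n+1}(-,M)\cong\Ext^n(-,\syz'M)/\p$, under which the transformation $\Ext^{n+1}(\varphi,-)$ corresponds to pull-back by $\varphi'$ on $\Ext^n/\p$. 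Combined with \cite[Corollary 6.9]{bfss}, the hypothesis that $\varphi$ is $n$-$\Ext$-phantom then forces $\varphi'$ to be $n$-$\Ext$-phantom, and hence $\delta_{\syz N}\varphi'$ is a $\p$-conflation. Because $b_1$ is $n$-$\Ext$-invertible (an inflation with $n$-projective cokernel, see \cite[Corollary 3.6]{bfss}), the operation $(-)b_1^{-1}$ acts as an isomorphism on $\Ext^n/\p$, and the subsequent pull-back along $a_1$ preserves $\p$; hence every representative appearing in $\syz(\varphi)=(\overline{((\delta_{\syz N}\varphi')b_1^{-1})a_1},\delta_{\syz N})$ lies in $\p$, so $\syz(\varphi)=0$ in $\C_{\p}$.

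For the invertibility condition, the same long-exact-sequence analysis shows that when $s$ is $n$-$\Ext$-invertible, the induced morphism $s'$ is $n$-$\Ext$-invertible as well. The inflations $a_1,b_1$ are $n$-$\Ext$-invertible by \cite[Corollary 3.6]{bfss}, and since $T$ itself sends $n$-$\Ext$-invertible morphisms to isomorphisms in $\C_{\p}$ by Definition \ref{dd}(2), each of $T(s'),T(a_1),T(b_1)$ is an isomorphism in $\C_{\p}$. Unwinding the composition law recalled in \ref{co}, one verifies the identity $\syz(s)=T(s')\circ T(b_1)^{-1}\circ T(a_1)$ in $\C_{\p}$, which presents $\syz(s)$ as a composition of isomorphisms and therefore as an isomorphism.

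The principal technical obstacle is the identification of $\p(\syz'M,-)$ with the kernel of the connecting homomorphism $\Ext^n(\syz'M,-)\rt\Ext^{n+1}(M,-)$: this is what converts the abstract vanishing $\Ext^{n+1}(\varphi,-)=0$ into the concrete conclusion that $\delta_{\syz N}\varphi'\in\p$. Once this identification is established (using that $n$-$\proj\C=n$-$\inj\C$ in an $n$-Frobenius category, so every morphism from $\syz'M$ into an $n$-projective object extends along the inflation $\syz'M\rt P'_M$), the remaining verifications reduce to the stability of $\p$ under pull-back and push-out and to the action of $n$-$\Ext$-invertible morphisms on $\Ext^n/\p$, both of which are available in \cite{bfss}.
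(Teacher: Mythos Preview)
Your high-level strategy---verify conditions (1) and (2) of Definition \ref{dd} for $\syz:\C\rt\C_{\p}$ and then invoke the universal property---is exactly what the paper does. The execution, however, differs in two places, and one of them contains a genuine error.

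For the invertible case the paper is considerably slicker. Rather than transferring invertibility to $s'$ via long exact sequences and then writing $\syz(s)$ as a composition of $T$-images, the paper observes that when $s$ is $n$-$\Ext$-invertible the pull-back $\delta_Ns$ is itself a unit conflation (by \cite[Lemma~3.9]{bfss}). Choosing $\delta'_M:=\delta_Ns$ in the $\luf$ forces $s'=1_{\syz N}$, so $\syz(s)=(\overline{(\delta_{\syz N}b^{-1})a},\delta_{\syz N})$ with $a,b$ inflations having $n$-projective cokernels; another application of \cite[Lemma~3.9]{bfss} shows this representative is a unit conflation, hence an isomorphism in $\C_{\p}$ by \cite[Corollary~7.10]{bfss}. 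This avoids both the Ext comparison and the verification of the identity $\syz(s)=T(s')\circ T(b_1)^{-1}\circ T(a_1)$.

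For the phantom case your justification of the key identification $\p(\syz'M,-)=\ker\big(\Ext^n(\syz'M,-)\rt\Ext^{n+1}(M,-)\big)$ is flawed: the claim that every morphism $\syz'M\rt P$ with $P$ $n$-projective extends along the inflation $\syz'M\rt P'_M$ would require $\Ext^1(M,P)=0$, which $n$-injectivity does \emph{not} guarantee for $n\geq 1$. The identification is nevertheless correct, but the argument should use the push-out description of $\p$: if $\ga=g\be$ with $g:Q'\rt K$ and $Q'$ $n$-projective ($=n$-injective), then the splice $\ga\delta'_M=g(\be\delta'_M)$ lies in $\Ext^{n+1}(M,Q')\cdot g=0$. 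The paper sidesteps this entirely: since $\varphi$ is $n$-$\Ext$-phantom, $\delta_N\varphi$ is a $\p$-conflation by \cite[Proposition~6.2]{bfss}; as $\varphi'\delta'_M=\delta_N\varphi$ with $\delta'_M$ a unit conflation, \cite[Proposition~6.2]{bfss} together with \cite[Theorem~6.7]{bfss} immediately gives that $\varphi'$ is $n$-$\Ext$-phantom, whence $\delta_{\syz N}\varphi'\in\p$ and $\syz(\varphi)=0$ by \cite[Proposition~6.6]{bfss}.
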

\begin{proof}
By the universal property of $(\C_{\p}, T)$, it suffices to show that if {$f:M\rt N$ is an $n$-$\Ext$-invertible morphism, then $\syz(f)$ is an isomorphism, and $\syz(f)=0$, whenever $f$ is an $n$-$\Ext$-phantom morphism. First, assume that { $f$ is an $n$-$\Ext$-invertible morphism. So, for a given unit conflation $\delta_N\in\Ext^1(N,\syz^1N)$,
\cite[Lemma 3.9]{bfss} would imply that  $\delta_Nf$ is  a unit conflation,  as well. Thus, putting $\delta'_M:=\delta_Nf$, the equality $1_{\syz N}\delta'_M=\delta_Nf$ holds true. Hence, one may consider an angled pair $\delta_M\st{a}\rt\delta''_M\st{b}\lf\delta'_M$, and get the equality $\syz(f)=(\overline{(\delta_{\syz N}b^{-1})a}, \delta_{\syz N})$. Another use of \cite[Lemma 3.9]{bfss} yields that $(\delta_{\syz N}b^{-1})a$ is a unit conflation, and so, \cite[Corollary 7.10]{bfss} forces $\syz(f)$ to be an isomorphism.\\
Next, suppose that $f$ is an $n$-$\Ext$-phantom morphism. According to \ref{ss}(4), there is a unit conflation $\delta'_M\in\U_1(M)$ and a morphism $f':\syz'M\rt\syz N$ such that $f'\delta'_M=\delta_Nf$. Since $f$ is $n$-$\Ext$-phantom, by \cite[Proposition 6.2]{bfss} $\delta_Nf$ is a $\p$-conflation. Thus \cite[Proposition 6.2]{bfss} combined with \cite[Theorem 6.7]{bfss} implies that $f'$ is an $n$-$\Ext$-phantom morphism, as well. Hence, by the definition of $\syz(f)$ and making use of \cite[Proposition 6.6]{bfss}, one may deduce that $\syz(f)=0$.} So, the universal property of the phantom stable category $(\C_{\p}, T)$, gives us a unique covariant additive functor $\Syz: \C_{\p}\lrt\C_{\p}$,  such that $\Syz T=\syz$. Thus the proof is completed.}
\end{proof}

\begin{rem}\label{22}As observed in Theorem \ref{fun1}, there exists a covariant additive functor $\Syz:\C_{\p}\lrt\C_{\p}$. { Here we illustrate how the functor $\Syz$ acts on objects and morphisms in $\C_{\p},$ explicitly. For a given object $M\in\C_{\p}$, first we consider a unit conflation $\delta_M:\syz M\rat P\re M$ in $\C$. So we have $\Syz M=\syz M$. Next take two objects $M,N\in\C_{\p}$ and consider a syzygy sequence $\syz N\rat P_0\re N$ in $\C$. Assume that $\hom_{\C_{\p}}(M, N)$ and $\hom_{\C_{\p}}(\Syz M, \Syz N)$ are realized by $\delta_{N}:\syz^{n}N\rat P_{n-1}\rt\cdots\rt P_1\rt P_0\re N$ and $\delta_{\syz N}:\syz^{n+1}N\rat P_n\rt\cdots\rt P_1\re\syz N$, respectively.} {For a given object $(\bar{\ga}, \delta_N)\in\hom_{\C_{\p}}(M, N)$, we may take an $\ruf $  $\ga=\delta_{N'}g$ of $\ga$, where  $g:M\rt N'$ is a morphism in $\C$. According to \ref{ss}(4), there is a unit conflation  $\delta'_M\in\U_1(M)$ and a morphism $g':\syz'M\rt\syz N'$ in $\C$ such that $\delta_{1}g=g'\delta'_M$, where $\delta_1=\syz N'\rat P'_0\re N'$. Thus we have $\Syz(\bar{\ga}, \delta_N)=(\overline{((\delta_{\syz N'}g'){b}^{-1})a}, \delta_{\syz N}),$ where $\delta_{M}\st{a}\rt\delta''_{M}\st{b}\lf\delta'_{M}$ is an angled pair.}
\end{rem}
The following auxiliary lemma will be used in the proof of the next theorem.
\begin{lem}\label{extphantom}Consider the following commutative diagram  {\footnotesize \[\xymatrix{\syz M\ar@{>->}[r]\ar[d]_{f'}& \ \ P\ar@{->>}[r]\ar[d] \ \ & M\ar[d]_{f}\\ \syz N\ar@{>->}[r]& \ \ Q\ar@{->>}[r]\ \ & N,}\]}with $P,Q\in n$-$\proj\C$. Then $f$ is an $n$-$\Ext$-phantom morphism if and only if $f'$ is so.
\end{lem}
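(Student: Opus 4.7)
The plan is to exploit the identity $\delta_N f = f'\delta_M$ in $\Ext^1(M,\syz N)$, where $\delta_M$ and $\delta_N$ denote the top and bottom conflations of the given diagram, together with the characterization of $n$-$\Ext$-phantom morphisms through $\p$-conflations that was applied in the proof of Theorem \ref{fun1}. This identity is the content of commutativity: the left-hand side is the pullback of $\delta_N$ along $f$, the right-hand side is the push-out of $\delta_M$ along $f'$, and the filler $P\rightarrow Q$ supplied by the diagram witnesses their agreement. Once this single equality is on the table, both directions of the biconditional follow by running the same argument forward or backward.

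For the forward direction I would argue exactly as in the proof of Theorem \ref{fun1}: assuming $f$ is $n$-$\Ext$-phantom, Proposition~6.2 of \cite{bfss} gives that the pullback $\delta_N f$ is a $\p$-conflation, hence so is $f'\delta_M$; since $\C$ is $n$-Frobenius, $P$ is $n$-injective as well as $n$-projective, so $\delta_M\in\U_1(M)$ also represents an element of $\U^1(\syz M)$, and the dual of Proposition~6.2 together with Theorem~6.7 of \cite{bfss} then forces $f'$ to be $n$-$\Ext$-phantom. For the converse, I would run the chain in reverse: if $f'$ is $n$-$\Ext$-phantom, then the dual of Proposition~6.2 applied to $f'$ through the cosyzygy sequence $\delta_M$ shows that $f'\delta_M$ is a $\p$-conflation, hence $\delta_N f$ is a $\p$-conflation, and a final application of Proposition~6.2 forces $f$ to be $n$-$\Ext$-phantom.

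The one delicate point I anticipate is the double role of $\delta_M$, which has to be read simultaneously as a length-$1$ syzygy sequence ending at $M$ (so that $f'\delta_M$ makes sense as a push-out and so that $\delta_N f$ factors through it) and as a length-$1$ cosyzygy sequence starting at $\syz M$ (so that the $\p$-conflation characterization of $n$-$\Ext$-phantomness can be invoked on $f'$ via $\delta_M$). This dual interpretation rests squarely on the $n$-Frobenius hypothesis — the coincidence of $n$-projective and $n$-injective objects — and is precisely what closes the argument into a biconditional rather than a one-sided implication, as only the forward direction was needed in the proof of Theorem \ref{fun1}.
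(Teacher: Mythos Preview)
Your approach is correct but differs from the paper's. The paper argues the forward implication by applying $\hom_{\C}(-,K)$ to the diagram for an arbitrary $K\in\C$, obtaining a commutative square that links $\Ext^{n+1}(f',K)$ with $\Ext^{n+2}(f,K)$; because $P,Q\in n\text{-}\proj\C$ the maps $\Ext^{n+1}(\syz N,K)\rt\Ext^{n+2}(N,K)$ and its analogue for $M$ are injections, and since an $n$-$\Ext$-phantom morphism is automatically $(n+1)$-$\Ext$-phantom, the vanishing of $\Ext^{n+2}(f,K)$ forces $\Ext^{n+1}(f',K)=0$. For the converse it applies $\hom_{\C}(K,-)$ instead and uses the $n$-injectivity of $P,Q$ (via the ambient $n$-Frobenius hypothesis) in the same way. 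Your route through the identity $\delta_Nf=f'\delta_M$ and the $\p$-conflation criterion stays entirely inside the unit-conflation machinery and is essentially the argument the paper itself deploys in the proof of Theorem~\ref{fun1} for the forward direction; your observation that the dual reading of $\delta_M$ as an element of $\U^1(\syz M)$ closes the biconditional is exactly right. The paper's long-exact-sequence argument is more elementary and makes the degree shift from $n{+}1$ to $n{+}2$ explicit, while your version avoids the auxiliary object $K$ and meshes more smoothly with the surrounding framework of $\ruf$'s, $\luf$'s, and angled pairs used elsewhere in the section.
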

\begin{proof}First assume that $f$ is an $n$-$\Ext$-phantom morphism. Take an arbitrary object $K\in\C$. Applying the functor $\hom_{\C}(-, K)$ to the above diagram, gives us the following commutative diagram with exact rows: {\footnotesize \[\xymatrix{\Ext^{n+1}(Q, K)\ar[r]\ar[d]& \ \  \Ext^{n+1}(\syz N, K)\ar[r]\ar[d]_{\Ext^{n+1}(f', K)} \ \ & \Ext^{n+2}(N, K)\ar[d]_{\Ext^{n+2}(f, K)}\\ \Ext^{n+1}(P, K)\ar[r]& \ \ \Ext^{n+1}(\syz M, K)\ar[r]\ \ & \Ext^{n+2}(M, K).}\]}As $P, Q\in n$-$\proj\C$, $\Ext^{n+1}(Q, K)=0=\Ext^{n+1}(P, K)$. On the other hand, since $f$ is an $n$-$\Ext$-phantom morphism, one may easily see that it is an $(n+1)$-$\Ext$-phantom morphism, meaning that the right column is zero. Thus $\Ext^{n+1}(f', K)=0$, and so, $f'$  is an $n$-$\Ext$-phantom morphism. For the converse, assume that $f'$ is an $n$-$\Ext$-phantom morphism. Now one applies the functor $\hom_{\C}(K, -)$ to the former diagram, and uses the fact that $f'$ is an $(n+1)$-$\Ext$-phantom morphism to infer that $f$ is also an $n$-$\Ext$-phantom morphism. So the proof is finished.
\end{proof}

\begin{theorem}\label{sem}$\Syz:\C_{\p}\lrt\C_{\p}$ is an equivalence functor.
\end{theorem}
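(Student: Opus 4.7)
My strategy is to construct a quasi-inverse to $\Syz$ via a parallel cosyzygy construction, and then exhibit the two natural isomorphisms witnessing the equivalence. The key structural input, which I would establish first, is that every $n$-projective object $P$ is isomorphic to $0$ in $\C_{\p}$: since $P \in n$-$\proj\C = n$-$\inj\C$, we have $\Ext^{n+1}(-, P) = 0$, so $\Ext^{n+1}(-, 1_P)$ vanishes trivially, $1_P$ is an $n$-$\Ext$-phantom morphism, and hence $T(1_P) = 0$. Combined with the fact, recalled in the excerpt, that morphisms with kernel and cokernel in $n$-$\proj\C$ are $n$-$\Ext$-invertible, this means such morphisms become isomorphisms in $\C_{\p}$.

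Next, dualizing the construction preceding Theorem \ref{fun}, for each $M \in \C$ I would fix a unit conflation $\delta^M: M \rt P \rt \syz^{-1}M$ in $\U^1(M)$ with $P$ being $n$-projective. For a morphism $f: M \rt N$ in $\C$, I define $\syz^{-1}(f)$ by first lifting $f$ to a morphism between cosyzygies using $\delta^M$ and a suitable member of $\U^1(N)$ (dual to \ref{lr}(2)), and then descending through a co-angled pair as in \ref{lr}(3). By arguments entirely dual to those of Theorem \ref{fun}, Lemma \ref{extphantom}, and Theorem \ref{fun1}, this produces an additive covariant functor $\syz^{-1}:\C\rt\C_{\p}$ that kills $n$-$\Ext$-phantom morphisms and sends $n$-$\Ext$-invertible morphisms to isomorphisms. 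The universal property of $(\C_{\p}, T)$ then yields an additive endofunctor $\Syz^{-1}:\C_{\p}\rt\C_{\p}$ with $\Syz^{-1}T=\syz^{-1}$.

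To obtain the natural isomorphism $\Syz \Syz^{-1} \cong \id_{\C_{\p}}$, I would fix $M$ and observe that both $\delta^M: M\rt P\rt\syz^{-1}M$ and the syzygy conflation $\delta_{\syz^{-1}M}:\syz\syz^{-1}M\rt Q\rt\syz^{-1}M$ lie in $\U_1(\syz^{-1}M)$. By \ref{lr}(4) there exist inflations $a_1:M\rt E$ and $a_2:\syz\syz^{-1}M\rt E$ with $n$-projective cokernels and $a_1\delta^M = a_2\delta_{\syz^{-1}M}$. By the opening observation, $T(a_1)$ and $T(a_2)$ are isomorphisms in $\C_{\p}$, and I set $\eta_M := T(a_2)^{-1}T(a_1): M \rt \Syz\Syz^{-1}M$. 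A dual co-angled-pair construction applied to $\U^1(\syz M)$ yields the counit $\epsilon_M: \Syz^{-1}\Syz M \rt M$, which together with $\eta$ gives the desired equivalence once naturality is verified.

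The hard part will be verifying naturality of $\eta$ and $\epsilon$. Given $(\bar\ga,\delta_N)\in\hom_{\C_{\p}}(M, N)$, I must show the square relating $\eta_M$, $\eta_N$, the given morphism, and its $\Syz\Syz^{-1}$-image commutes in $\C_{\p}$. Since morphisms in $\C_{\p}$ are equivalence classes manipulated via right unit factorizations and co-angled pairs (recalled in \ref{co}), this reduces to a lengthy but mechanical diagram chase intertwining the angled pairs defining $\eta_M$ and $\eta_N$ with those used to compute $\Syz\Syz^{-1}(\bar\ga,\delta_N)$. The arguments would parallel Proposition 5.8 and Theorem 8.2 of \cite{bfss} together with the proof of Theorem \ref{fun}; at each step the essential simplification is that inflations and deflations with $n$-projective cokernel (respectively kernel) are inverted by $T$, so stray $n$-projective pieces collapse in $\C_{\p}$.
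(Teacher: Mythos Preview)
Your approach is correct but takes a genuinely different route from the paper's. The paper does not construct a quasi-inverse; instead it verifies directly that $\Syz$ is faithful, full, and dense. Faithfulness is shown by taking $(\bar\ga,\delta_N)$ with $\Syz(\bar\ga,\delta_N)=0$, choosing a right unit factorization $\ga=\delta_{N'}g$, and using Lemma~\ref{extphantom} to transport the $n$-$\Ext$-phantom property from the induced syzygy-level map $g'$ back down to $g$, whence $\ga$ is a $\p$-conflation. Fullness is obtained by an explicit splice: given $(\bar\th,\delta_{\syz N})$, the paper truncates $\th$ after one step, pushes into an $n$-projective, and uses a co-angled pair between the resulting length-one conflation and the first step of $\delta_{\syz N}$ to manufacture a preimage in $\hom_{\C_{\p}}(M,N)$. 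Density uses exactly the cosyzygy conflation you invoke, but only once, to exhibit $X\cong\Syz(\syz^{-1}X)$.

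Your strategy of dualizing Theorems~\ref{fun} and~\ref{fun1} to produce $\Syz^{-1}$ and then exhibiting unit and counit is more constructive and yields an explicit inverse functor, which could be useful downstream when setting up the triangulated structure. The price is that you must replicate the entire additive-functor and $n$-$\Ext$-phantom/invertible verification on the cosyzygy side and then carry out the naturality argument for $\eta$ and $\epsilon$, which you correctly flag as the substantive step; the paper's full--faithful--dense route sidesteps all of this, since a quasi-inverse then exists abstractly with no further chasing. One minor point to tighten in your outline: you should also record that $\eta_M$ is independent of the particular angled pair $(a_1,a_2)$ chosen, which follows from the same \cite[Proposition~5.8]{bfss} type argument used in the proof of Theorem~\ref{fun} but deserves an explicit sentence.
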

\begin{proof}Assume that $M$ and $N$ are two arbitrary objects of $\C_{\p}$. We would like to show that $\Syz:\hom_{\C_{\p}}(M, N)\lrt\hom_{\C_{\p}}(\Syz M, \Syz N)$ is an isomorphism. To this end, consider an object $(\bar{\ga}, \delta_N)\in\hom_{\C_{\p}}(M, N)$ with $\Syz(\bar{\ga}, \delta_N)=0$. We must show that $\bar{\ga}=0$, namely, $\ga$ is a $\p$-conflation. {Take an $\ruf$ $\ga=\delta_{N'}g$ of $\ga$, where $g:M\rt N'$ is a morphism in $\C$. As explained in Remark \ref{22},  $\Syz(\bar{\ga}, \delta_N)=(\overline{((\delta_{\syz N'}g'){b}^{-1})a}, \delta_{\syz N}),$ where $g':\syz'M\rt\syz N'$ is a morphism in $\C$ with $\delta_1g=g'\delta'_M$ and $\delta_{M}\st{a}\rt\delta''_{M}\st{b}\lf\delta'_{M}$ is an angled pair.} Since $a$ and $b$ are $n$-$\Ext$-invertible morphisms, our assumption together with \cite[Proposition 6.6]{bfss}, would imply that $\delta_{\syz N'}g'$ is a $\p$-conflation. {Thus, $g'$ will be an $n$-$\Ext$-phantom morphism, thanks to \cite[Proposition 6.2]{bfss}. Now one may apply Lemma \ref{extphantom} and deduce that the same is true for $g$, and so, another use of  \cite[Proposition 6.2]{bfss},  implies that $\ga=\delta_{N'}g$ is a $\p$-conflation. This means that the functor $\Syz$ is faithful.

Now we show that $\Syz$ is full. Assume that $(\bar{\th}, \delta_{\syz N})\in\hom_{\C_{\p}}(\Syz M, \Syz N)$ is arbitrary, with $\th:\syz^{n+1}N\rat X_n\rt\cdots\rt X_1\re\syz M$ and $\delta_{\syz N}:\syz^{n+1}N\rat P_n\rt\cdots\rt P_1\re\syz N$. Set $\th_1:=\syz^{n+1}N\rat X_n\re L$ and $\th_2:= L\rat X_{n-1}\rt\cdots\rt X_1\re\syz M$. Taking an inflation $X_n\rt Q$ with $Q\in n$-$\proj\C$, one may obtain the following commutative diagram:{\footnotesize \[\xymatrix{\th_1: \syz^{n+1}N \ar@{>->}[r]\ar@{=}[d]& \ \ X_n\ar@{->>}[r] \ \ \ar[d] & L\ar[d]_{h}\\ \th'_1:\syz^{n+1}N\ar@{>->}[r]& \ \ Q\ar@{->>}[r] \ \ & T.}\]}Consider the following co-angled pair: {\footnotesize \[\xymatrix{\syz^{n+1}N \ar@{>->}[r]\ar@{=}[d]& \ \ Q\ar@{->>}[r] \ \ & \ T \\ \syz^{n+1}N\ar@{>->}[r]\ar@{=}[d]& \ \ P_n\oplus Q\ar@{->>}[r]\ar[d]\ar[u] \ \ & T'\ar[d]_{a}\ar[u]^{b}\\ \syz^{n+1}N\ar@{>->}[r]& \ \ P_n\ar@{->>}[r]\ \ & \syz^nN.}\]}Since $a$ and $b$ are $n$-$\Ext$-invertible morphisms, applying \cite[Lemma 3.9]{bfss} gives us unit conflations $\delta_1\in\Ext^n(N, T)$ and $\delta\in\Ext^n(N, T')$ such that $\delta_N=a\delta$ and $\delta_1=b\delta$. Set $\be:=h\th_2\delta_M$, where $\delta_M: \syz M\rat P\re M$,  and $\ga:=ab^{-1}\be$. It is easy to observe that $(\bar{\ga}, \delta_N)=(\bar{\be}, \delta_1)$. Now, putting $\th':=\th'_1h\th_2$ and applying Remark \ref{22}, one may easily see that $\Syz(\bar{\ga}, \delta_N)=\Syz(\bar{\be}, \delta_1)=(\bar{\th'}, \delta_{\syz N})=(\bar{\th}, \delta_{\syz N})$. Thus the functor $\Syz$ is full.
The final issue to address is to examine the density of $\Syz$. To this end, take an arbitrary object $X\in\C$. As $\C$ is an $n$-Frobenius category, there is a unit conflation $X\rat Q_{-1}\re\syz^{-1}X$ in $\C$.  This, in particular, reveals that $X\cong\syz(\syz^{-1}X)=\Syz(\syz^{-1}X)$ in $\C_{\p}$, giving the desired result.
}
\end{proof}

The following observation, will be used in the proof of Theorem \ref{axiomtr3}.
\begin{s}\label{123}
Assume that $(\bar{\ga},\delta_N)\in\hom_{\C_{\p}}(M, N)$ is a morphism with $\ga: \syz^nN\rat X_{n-1}\rt\cdots\rt X_1\rt X_0\re M$. In view of \cite[Lemma 5.1(1)]{bfss}, we may assume that $\ga$ has a form $\syz^nN\rat P_{n-1}\rt\cdots\rt P_1\rt X_0\re M$, where each $P_i$ is $n$-projective. Breaking $\ga$, gives us two  conflations $\ga_1:\syz^nN\rat P_{n-1}\rt\cdots\rt P_1\re T$ and $\ga_2: T\rat X_0\re M$ in $\C$. Consider an $\luf$ of  $\ga_2$ as follows: \[\xymatrix{\delta'_M:\syz'M~\ar@{>->}[r]^{i}\ar[d]_{f}& Q\ar@{->>}[r]\ar[d]_{\pi}& M\ar@{=}[d]\\ \ga_2:T~\ar@{>->}[r]^{g} &X_0~\ar@{->>}[r]^{h} & M.}\]So, by taking a unit conflation $\delta_T:\syz^{n+1}N\rat P_n\rt P_{n-1}\rt\cdots\rt P_1\re T$, one has $\Syz(\bar{\ga}, \delta_N)=(\overline{((\delta_Tf) b^{-1})a}, \delta_{T})$, where $\delta_{M}\st{a}\rt\delta''_{M}\st{b}\lf\delta'_{M}$ is an angled pair.
Consider the pull-back diagram \[\xymatrix{\ga'_2:\syz T~\ar@{>->}[r]^{g'}\ar@{=}[d]& L\ar@{->>}[r]^{h'}\ar[d]_{e}& \syz'M\ar[d]_{f}\\ \delta_1: \syz T~\ar@{>->}[r]^{u} &P_1~\ar@{->>}[r]^{\pi_1} & T.}\]Combining   the preceding diagrams, yields the following commutative diagram: {\footnotesize \[\xymatrix{\ga'_2:\syz T \ar@{>->}[r]^{g'}\ar@{>->}[d]_{u}& \ \ L\ar@{->>}[r]^{h'} \ \ \ar@{>->}[d]_{[e~~ih']^t} & \ \syz' M\ar@{>->}[d]_{i}\\ P_1\ar@{>->}[r]^{[1~~0]^t}\ar@{->>}[d]_{\pi_1}& \ \ P_1\oplus Q\ar@{->>}[r]^{[0~~1]}\ar@{->>}[d]_{[g\pi_1~~\pi]} \ \ & Q\ar@{->>}[d]_{h\pi}\\ \ga_2:T\ar@{>->}[r]^{g}& \ \ X_0\ar@{->>}[r]^{h}\ \ & M.}\]}As observed just above,  $\Syz(\bar{\ga}, \delta_N)=(\overline{((\delta_2\ga'_2) b^{-1})a}, \delta_{T})$ with $\delta_2:\syz^{n+1}N\rat P_n\rt\cdots\rt P_2\re\syz T$.
\end{s}

\section{Realizing commutative diagrams in  phantom stable categories}

This section is devoted to describing commutativity of squares in phantom stable categories via commutative diagrams in $\C$. This will play a crucial role in the verification of the validity of the axiom (TR3) for $\C_{\p}$, which is stated in the next section. We begin with a useful lemma.

Assume that $M\st{(\bar{\ga},\delta_N)}\lrt N\st{(\bar{\be}, \delta_K)}\lrt K$ are two composable morphisms in $\C_{\p}$. As mentioned in  \ref{co}, $\overline{{\be}\circ{\ga}}=\overline{((\be a)b^{-1})f}$, where $\ga=\delta f$ is an $\ruf$ of $\ga$ and $[\delta_N a, \delta b]$ is a co-angled pair. The result below, reveals that this can be also computed by taking an $\luf$ of $\be$ and considering an angled pair.

\begin{lem}\label{luf}Keeping the notation above, let $\be=g\delta'_N$ be an $\luf$ of $\be$ and $[c\delta_N, d\delta'_N]$ an angled pair. Then $(\overline{\be\circ\ga}, \delta_K)=(\overline{g(d^{-1}(c\ga))}, \delta_K)$.
\end{lem}
\begin{proof}Assume that $\ga=\delta f$ is an $\ruf$ of $\ga$. Taking a co-angled pair $[\delta_Na, \delta b]$, we have that $\bar{\be}\circ\bar{\ga}=\overline{((\be a)b^{-1})f}$. Since $\be=g\delta'_N$ and $[c\delta_N, d\delta'_N]$ is an angled pair, one obtains the equalities
\[\begin{array}{lllll}
((\be a)b^{-1})f&=_{\p}((g\delta'_N a)b^{-1})f&\\ &=_{\p}g(((d^{-1}c\delta_N)
a)b^{-1})f \\ &=
_{\p}g(d^{-1}(c((\delta_Na)b^{-1})))f\\
&=_{\p}g(d^{-1}(c\delta))f=_{\p}g(d^{-1}(c\ga)),
\end{array}\]
giving the desired result.
\end{proof}

\begin{rem}\label{sq}Assume that 
\begin{equation}\label{1122}
\xymatrix{M~\ar[r]^{(\overline{\delta_Nf}, \delta_N)}\ar[d]_{(\bar{\th}, \delta_{X})} & N\ar[d]^{(\bar{\be},\delta_Y)}\\ X~\ar[r]^{(\bar{\ga}, \delta_Y)} & Y,}
\end{equation}
is a commutative square in $\C_{\p}$. By the definition of composition of morphisms, we have $(\bar{\be}, \delta_Y)\circ(\overline{\delta_Nf}, \delta_N)=(\overline{\be f}, \delta_Y)$.  Moreover, assume that $\ga=t\delta'_X$ is an $\luf$ of $\ga$, in which $t:{\syz'}^nX\rt\syz^n Y$ is a morphism in $\C$. Set  $\bar{\th'}:=\overline{b^{-1}(a\th)}$, where $\delta_X\st{a}\rt\delta''_X\st{b}\lf\delta'_X$ is an angled pair. By Lemma \ref{luf}, we get the equality $\bar{\ga}\circ\bar{\th}=\overline{t\th'}$. Indeed, $(\bar{\th}, \delta_X)=(\bar{\th'}, \delta'_X)$, and so, $(\bar{\ga}, \delta_Y)\circ(\bar{\th}, \delta_X)=(\bar{\ga}, \delta_Y)\circ(\bar{\th'}, \delta'_X)=(\overline{t\th'}, \delta_Y)$. Namely, the above square is equal to the following one: {\footnotesize \[\xymatrix{M~\ar[r]^{(\overline{\delta_{N}f},\delta_{N})}\ar[d]_{(\bar{\th'}, \delta'_{X})} & N\ar[d]^{(\bar{\be},\delta_Y)}\\ X~\ar[r]^{(\overline{t\delta'_{X}}, \delta_Y)} & Y.}\]} This means that  every commutative square of the form (\ref{1122}) in $\C_{\p}$, can be considered as the latter one.
\end{rem}

The following easy lemma plays an essential role in describing commutative squares in the phantom stable categories.
\begin{lem}\label{pp1}  Consider a commutative diagram in $\C$
\begin{equation}\label{11}
\begin{split}
{\footnotesize \xymatrix{\alpha:N \ ~\ar@{>->}[r] \ \ \ar[d]_f& \ \ X_{k-1}\ar[r] \ \ \ar[d]_{h_{k-1}}& \cdots \ar[r]& \ \ X_0\ar@{->>}[r] \ \ \ar[d]_{h_0} & \ M\ar[d]_g\\ \be:N' \ ~\ar@{>->}[r] \ \ &\ \ Y_{k-1}\ar[r] \ \ &\cdots\ar[r]& \ \ Y_0\ar@{->>}[r] \ \ & M',}}
\end{split}
\end{equation}
where $k\geq 1$. Then $f\al=\be g$. 
\end{lem}
\begin{proof}
Since the case $k=1$ is \cite[Chapter VII, Lemma 1.1]{mit}, we assume that $k\geq 2$. Take the pull-back diagram
{\footnotesize \[\xymatrix{\be g:N'\ ~\ar@{>->}[r] \ \ \ar@{=}[d]& \ \ Y_{k-1}\ar[r] \ \ \ar@{=}[d]& \cdots \ar[r]& \ \ Y_1\ar[r] \ \ \ar@{=}[d] & \ \ T\ar@{->>}[r] \ \ \ar[d] & \ M\ar[d]_g\\ \beta:N' \ ~\ar@{>->}[r] \ \ &\ \ Y_{k-1}\ar[r] \ \ &\cdots\ar[r]& \ \ Y_1\ar[r]& \ \ Y_0\ar@{->>}[r] \ \ & M'.}\]}The universal property of the pull-back yields the existence of the commutative diagram {\footnotesize \[\xymatrix{\al:N\ ~\ar@{>->}[r] \ \ \ar[d]_{f}& \ \ X_{k-1}\ar[r] \ \ \ar[d]_{h_{k-1}}& \cdots \ar[r]& \ \ X_1\ar[r] \ \ \ar[d]_{h_1} & \ \ X_0\ar@{->>}[r] \ \ \ar[d] & \ M\ar@{=}[d]\\ \be g:N' \ ~\ar@{>->}[r] \ \ &\ \ Y_{k-1}\ar[r] \ \ &\cdots\ar[r]& \ \ Y_1\ar[r]& \ \ T\ar@{->>}[r] \ \ & M.}\]}Similarly, taking the push-out of $\al$ along $f$ and considering the latter diagram, the universal property of the push-out would give us the commutative diagram
{\footnotesize \[\xymatrix{f\al:N'\ ~\ar@{>->}[r] \ \ \ar@{=}[d]& L\ar[r]\ar[d]& \ \ X_{k-2}\ar[r] \ \ \ar[d]_{h_{k-2}} & \cdots \ar[r] &\ \ X_1\ar[r] \ \ \ar[d]_{h_1} & \ \ X_0\ar@{->>}[r] \ \ \ar[d] & \ M\ar@{=}[d]\\ \be g:N' \ ~\ar@{>->}[r] \ \ &\ \ Y_{k-1}\ar[r] \ \ &\ \ Y_{k-2}\ar[r] \ \ &\cdots\ar[r]& \ \ Y_1\ar[r]& T\ar@{->>}[r] \ \ & M.}\]}Now applying \cite[Chapter VII, Proposition 3.1]{mit} yields that $f\al=\be g$, as claimed.
\end{proof}

\begin{rem}\label{remsq2} Consider the following commutative diagram in $\C$ {\footnotesize\[\xymatrix{\th: \syz^n Y~\ar@{>->}[r]\ar[d]_{t}& E_{n-1} \ar[r]\ar[d]&\cdots\ar[r]& E_0\ar@{->>}[r]\ar[d] & M \ar[d]^{f}\\ \be: \syz^n X~\ar@{>->}[r]& Z_{n-1}\ar[r] &\cdots~\ar[r]& Z_0\ar@{->>}[r] & N.}\]}According to Lemma \ref{pp1}, one has the equality $t\th=\be f$. This, in conjunction with  the definition of the composition of morphisms in $\C_{\p}$ and Lemma \ref{luf}, would give us the following commutative square in $\C_{\p}$ {\footnotesize \[\xymatrix{M~\ar[r]^{(\overline{\delta_Nf}, \delta_N)}\ar[d]_{(\bar{\th}, \delta_Y)} & N\ar[d]^{(\bar{\be}, \delta_X)}\\ Y~\ar[r]^{(\overline{t\delta_Y}, \delta_X)} & X.}\]} 
\end{rem}

In the following, we remind a characterization of  commutative squares in the stable category of a Frobenius category.

\begin{rem}\label{rr}Assume that $\C$ is a Frobenius category.  It is known that a given square  {\footnotesize \[\xymatrix{M~\ar[r]^{\bar{f}}\ar[d]_{\bar{h}} & N\ar[d]^{\bar{g}}\\ Y~\ar[r]^{\bar{e}} & X,}\]}in $\c$ is commutative if and only if there exist two morphisms $a: M\rt P$ and $b: P\rt X$ in $\C$ with $P\in\proj\C$, such that  {\footnotesize \[\xymatrix{M~\ar[r]^{[f~~a]^t}\ar[d]_{h} & N\oplus P\ar[d]^{[g~~b]}\\ Y~\ar[r]^{e} & X}\]}is a commutative square in $\C$.
\end{rem}

The next proposition is the main result of this section. It gives a description of commutative squares in phantom stable category $\C_{\p}$ in terms of commutative diagrams in $\C$. 

\begin{prop}\label{sq1}Consider the following square in $\C_{\p}$: {\footnotesize \[\xymatrix{M~\ar[r]^{(\overline{\delta_Nf}, \delta_N)}\ar[d]_{(\bar{\th}, \delta_Y)} & N\ar[d]^{(\bar{\be}, \delta_X)}\\ Y~\ar[r]^{(\overline{t\delta_Y}, \delta_X)} & X,}\]}where $t:\syz^n Y\rt\syz^n X$ is a morphism in $\C$. Then the square is commutative in $\C_{\p}$ if and only if there exists a conflation $\al\in\Ext^n(P, \syz^nX)$ with $P\in n$-$\proj\C$ and a morphism $e:M\rt P$ in $\C$ such that the following diagram in $\C$: {\footnotesize\[\xymatrix{\th: \syz^n Y~\ar@{>->}[r]\ar[d]_{t}& E_{n-1} \ar[r]\ar[d]&\cdots\ar[r]& E_0\ar@{->>}[r]\ar[d] & M \ar[d]^{[f~~e]^t}\\ \be': \syz^n X~\ar@{>->}[r]& Z_{n-1}\ar[r] &\cdots~\ar[r]& Z_0\ar@{->>}[r] & N\oplus P,}\]}is commutative, where $\be'=\nabla(\be\oplus\al)$ with $\nabla:\syz^n X\oplus\syz^n X\st{[1~~1]}\lrt\syz^n X$ .
\end{prop}
\begin{proof}Since for $n\geq 2$, the proof goes along the same lines as that of the case $n=1$, we prove the result  exclusively for $n=1$. As  the `if' part follows from Remark \ref{remsq2}, we shall prove the `only if' part. By definition, $\bar{\be}\circ\overline{\delta_N f}=\overline{\be f}$. On the other hand, applying Lemma \ref{luf}, yields that $\overline{t\delta_Y}\circ\bar{\th}=\overline{t\th}$. Since the square is commutative in $\C_{\p}$, there exists a conflation $\al:\syz X\rat L\re P$ with $P\in$1-$\proj\C$ and a morphism $e:M\rt P$ such that $t\th-\be f=\al e$. So $t\th=\be f+\al e=\nabla(\be f\oplus\al e)\Delta$, where $\nabla:\syz X\oplus\syz X\st{[1~~1]}\lrt\syz X$ and $\Delta:M\st{{{\tiny {\left[\begin{array}{ll} 1 \\ 1 \end{array} \right]}}}}\lrt M\oplus M$. Consider the following equalities: $$\nabla(\be f\oplus\al e)\Delta=\nabla((\be\oplus\al)(f\oplus e))\Delta=\nabla(\be\oplus\al)({{\tiny {\left[\begin{array}{ll} f & 0 \\ 0 & e \end{array} \right]}}}{{\tiny {\left[\begin{array}{ll} 1 \\ 1 \end{array} \right]}}})=\nabla(\be\oplus\al){{{\tiny {\left[\begin{array}{ll} f \\ e \end{array} \right]}}}},$$where the first equality follows from \cite[Chapter VII, Lemma 1.4]{mit} and  the second one is true by \cite[Chapter VII, Lemma 1.3]{mit}. 
 Namely, we have that $t\th=\be'[f~~e]^t$, where $\be'=\nabla(\be\oplus\al)$. Thus, applying Lemma \ref{pp1}  gives us the commutative diagram {\footnotesize\[\xymatrix{\th: \syz Y~\ar@{>->}[r]\ar[d]_{t}& E_1 \ar@{->>}[r]\ar[d]& M \ar[d]^{[f~~e]^t}\\ \be': \syz X~\ar@{>->}[r] &Z~\ar@{->>}[r] & N\oplus P,}\]} which completes the proof.

\end{proof}

\section{Triangulated structure on phantom stable categories}
This section contains the main result of the paper, which states that the phantom stable category  $(\C_{\p}, T)$   possesses a triangulated structure with the shift functor $\Syz$.  Triangulated categories, first introduced by Verdier \cite{ver}  in the mid 1960s,   are important structures lying at the confluence of several exciting areas of mathematics. We will  begin  by recalling the definition of a triangulated category.

\begin{dfn}
Let $\A$ be an additive category and $\T:\A\rt\A$ an auto-equivalence, which is called a shift (or translation) functor. Let $\Delta$ be a collection of sequences of morphisms in $\A$ having the form
$$
\T X\xrightarrow{g} Z\xrightarrow{h} Y\xrightarrow{f} X,
$$
which are called {\it exact triangles}. Then the triple $(\A, \T, \Delta)$ is said to be a {\it triangulated category}, if $\Delta$ is closed under isomorphisms and the following conditions are satisfied.
\begin{enumerate}

\item[(TR1)]
For any morphism $f:Y\to X$ in $\A$, there is an exact triangle
$\T X\xrightarrow{g} Z\xrightarrow{h} Y\xrightarrow{f} X$. Moreover, for any object $X\in\A$, the sequence $\T X\rt 0\rt X\st{\id}\rt X$ is an exact triangle.
\item[(TR2)](Rotation axiom)
For a given exact triangle $\T X\xrightarrow{g} Z\xrightarrow{h} Y\xrightarrow{f} X$, the sequence $\T Y\xrightarrow{-\T f}\T X\xrightarrow{g} Z\xrightarrow{h} Y$ is an exact triangle.
\item[(TR3)]
For any commutative diagram of the form
$$
\xymatrix{
\T X\ar[r]^g\ar[d]^{\T\gamma} & Z\ar[r]^h & Y\ar[r]^f\ar[d]^\beta & X\ar[d]^\gamma \\
\T X'\ar[r]^{g'} & Z'\ar[r]^{h'} & Y'\ar[r]^{f'} & X',
}
$$
where the rows are exact triangles, there is a morphism $\alpha:Z\to Z'$ in $\A$ making the completed diagram commutative.
\item[(TR4)]

For two morphisms $g:Z\to Y$ and $f:Y\to X$ in $\A$, there exists a commutative diagram
\[\xymatrix{& \T A\ar[r]\ar[d]& \T Y\ar[d]&\\ \T Y\ar[d]_{\T(f)}\ar[r]& C\ar@{=}[r]\ar[d]& C\ar[d]&\\ \T X~\ar[r]\ar@{=}[d]& B \ar[r]\ar[d]& Z\ar[r]^{fg}\ar[d]_{g} & X\ar@{=}[d]\\ \T X~\ar[r]& A\ar[r]& Y\ar[r]^{f} &X,}\]
where the last two rows and the two central columns are exact triangles.
\end{enumerate}
\end{dfn}

Inspired by the case for the stable category of a Frobenius category, we define exact triangles in  phantom stable categories as follows.

\begin{dfn}Assume that $f:M\rt N$ is a morphism in $\C$ and {$\delta_N: \syz N\rat P\re N$ is a syzygy sequence}. Consider the following pull-back diagram in $\C$:
\[\xymatrix{\syz N~\ar@{>->}[r]^{g} \ar@{=}[d]& T_f\ar@{->>}[r]^{h}\ar[d]& M\ar[d]_f\\ \syz N~\ar@{>->}[r] &P~\ar@{->>}[r]^{\pi} & N.}\]So we have the sequence $\syz N\st{T(g)}\rt T_f\st{T(h)}\rt M\st{T(f)}\rt N$ in $\C_{\p}$, which we call a standard triangle. Any sequence in $\C_{\p}$ which is isomorphic to a standard triangle, will be called an exact triangle.
\end{dfn}

\begin{lem}\label{lem22}Construction of standard triangles  is independent of the choice of syzygy sequences.
\end{lem}
\begin{proof} Assume that $f: M\rt N$ is a morphism in $\C$ and $\delta_N: \syz N\rat P\re N$, $\delta'_N:\syz'N\rat P'\re N$ are two syzygy sequences which give rise to standard triangles  $\syz N\st{T(g)}\lrt T_f\st{T(h)}\lrt M\st{T(f)}\lrt N$ and  $\syz' N\st{T(g')}\lrt T'_f\st{T(h')}\lrt M\st{T(f)}\lrt N$. Considering an  angled pair $\delta_N\st{a}\lrt\delta''_N\st{a'}\llf\delta'_N$, one may obtain the following commutative diagram:
{\footnotesize\[\xymatrix{\syz N~\ar@{>->}[r]^{g} \ar[d]_{a}& T_f\ar@{->>}[r]^{h}\ar[d]& M\ar[d]_{f}\\ \syz''N~\ar@{>->}[r] & P\oplus P'\ar@{->>}[r]& N \\ \syz'N\ar@{>->}[r]^{g'}\ar[u]^{a'} & T'_f \ar@{->>}[r]^{h'}\ar[u] & M\ar[u]^f.}\]}Taking the pull-back diagram
\[\xymatrix{\syz'' N~\ar@{>->}[r]^{g''} \ar@{=}[d]& T''_f\ar@{->>}[r]^{h''}\ar[d]& M\ar[d]_f\\ \syz'' N~\ar@{>->}[r] &P\oplus P'~\ar@{->>}[r]^{[\pi~~\pi']}& N}\]and using the universal property of pull-back diagrams, we shall get  the following commutative diagram in $\C$:{\footnotesize\[\xymatrix{\syz N~\ar[r]^{g} \ar[d]_{a}& T_f\ar[r]^{h}\ar[d]_{c}& M\ar@{=}[d]\ar[r]^{f}&N\ar@{=}[d]\\ \syz''N~\ar[r]^{g''} &T''_f~\ar[r]^{h''}& M\ar@{=}[d]\ar[r]^{f}&N\ar@{=}[d]\\ \syz'N~\ar[r]^{g'}\ar[u]^{a'} &T'_f~\ar[r]^{h'} \ar[u]^{c'}& M\ar[r]^{f}&N.}\]}One should note that since  $a, a'$ are $n$-$\Ext$-invertible morphisms, the same is true for $c$ and $c'$.  Hence applying the functor $T$ gives the desired result.
\end{proof}

Assume that $f:M\rt N$ is a morphism in $\C$ and $\syz N\st{T(g)}\rt T_f\st{T(h)}\rt M\st{T(f)}\rt N$ is the corresponding standard triangle in $\C_{\p}$. According to Lemma \ref{lem22}, this sequnce is unique up to isomorphism.  We shall  refer to this triangle,  as the triangle that $f$ is embedded.\\

As mentioned in the introduction, the stable category of a Frobenius category carries a triangulated structure, wherein the shift functor is given by the first syzygy functor $\syz$. In what follows  it will be shown that, in the similar way, the phantom stable category $\C_{\p}$ of an $n$-Frobenius category $\C$ is also triangulated, {with the shift functor $\Syz$, the syzygy functor on $\C_{\p}$. Precisely, assume that $\Delta$ is the class of all exact triangles in $\C_{\p}$. The main result of the paper asserts that $(\C_{\p}, \Syz, \Delta)$ is a triangulated category. We should emphasize that the approach to prove this result is similar to the case for the stable category of a Frobenius category.  First we quote a couple of useful facts.
\begin{obs}\label{tt}Standard triangles can be represented as one of the following forms.\\ (1) Let $f:M\rt N$ be a morphism in $\C$ and $\delta_N:\syz N\rat P\st{\pi}\re N$ a syzygy sequence. Suppose
$\syz N\st{T(g)}\lrt X\st{T(h)}\lrt M\st{T(f)}\lrt N$ is the standard triangle that $f$ is embedded. The following pull-back diagram:
{\[\xymatrix{& X\ar@{=}[r]\ar@{>->}[d]_{[1~~t]^t}& X\ar@{>->}[d]_{[h~~t]^t}\\
\syz N~\ar@{>->}[r]^{[g~~0]^t} \ar@{=}[d]& X\oplus P\ar@{->>}[r]^{h\oplus 1}\ar@{->>}[d]_{[t~~-1]}& M\oplus P\ar@{->>}[d]_{[f~~-\pi]}\\ \syz N~\ar@{>->}[r]^{s} &P~\ar@{->>}[r]^{\pi} & N,}\]gives the standard triangle $\syz N\st{T([g~~0]^t)}\lrt X\oplus P\st{T(h\oplus 1)}\lrt M\oplus P\st{T([f~~-\pi])}\lrt N$ in $\C_{\p}$.} One should note that since $[1~~t]^t$ is an inflation with $n$-projective cokernel, it is an $n$-$\Ext$-invertible morphism. Also, as $P=0$ in $\C_{\p}$ and $T([f~~-\pi])=T(f)$, the latter triangle is the same as the former one. This implies that in any standard triangle of the form $\syz N\st{T(g)}\lrt X\st{T(h)}\lrt M\st{T(f)}\lrt N$ in $\C_{\p}$, $f$ can be assumed to be a deflation in $\C$, and  there is an exact triangle $\syz N\st{T(g')}\lrt X'\st{T(h')}\lrt M'\st{T(f)}\lrt N$,
which is isomorphic to the previous standard triangle, such that $X'\st{h'}\rat M'\st{f}\re N$ is a conflation in $\C$.
 Conversely, assume that $\et:X\rat M\st{f}\re N$ is  a conflation in $\C$. Taking an $\luf$ $\et=g\delta_N$, where $\delta_N$ is a syzygy sequence of $N$ and $g:\syz N\rt X$ is a morphism in $\C$, we obtain a triangle $\syz N\st{T(g)}\lrt X\st{T(h)}\lrt M\st{T(f)}\lrt N$, which is isomorphic to the former standard triangle. {This should also be compared with the standard triangles in the stable category of a Frobenius category, see \cite[Theorem 2.6]{ha1}.} 
\\ {(2) Assume that $f:M\rt N$ is a morphism in $\C$ and $\syz N\st{T(g)}\lrt X\st{T(h)}\lrt M\st{T(f)}\lrt N$ is the standard triangle that $f$ is embedded. As declared in the previous statement, {$X\st{h}\rat M\st{f}\re N$ can be assumed to be a conflation} in $\C$. {Consider a unit conflation {$\delta_X:\syz^nX\rat Q_{n-1}\rt\cdots\rt Q_0\re X$ and a deflation $P_0\re M$. So we will get the following commutative diagram:  {\footnotesize \[\xymatrix{\delta_1:\syz X \ar@{>->}[d]_{h_1}\ar@{>->}[r]& \ \ Q_0\ar@{->>}[r] \ \ \ar@{>->}[d] & \ X\ar@{>->}[d]_{h}\\ \delta''_1:\syz M\ar@{>->}[r]\ar@{->>}[d]_{f_1}& \ \ Q_0\oplus P_0\ar@{->>}[r]\ar@{->>}[d]\ \ & M\ar@{->>}[d]_{f}\\ \delta'_1:\syz N\ar@{>->}[r]& \ \  P_0\ar@{->>}[r]\ \ & N,}\]}where all rows and columns are {exact}. Next take a deflation $P_1\re\syz M$ and get a commutative diagram 
{\footnotesize \[\xymatrix{\delta_2:\syz^2 X \ar@{>->}[d]_{h_2}\ar@{>->}[r]& \ \ Q_1\ar@{->>}[r] \ \ \ar@{>->}[d] & \ \syz X\ar@{>->}[d]_{h_1}\\ \delta''_2:\syz^2 M\ar@{>->}[r]\ar@{->>}[d]_{f_2}& \ \ Q_1\oplus P_1\ar@{->>}[r]\ar@{->>}[d]\ \ & \syz M\ar@{->>}[d]_{f_1}\\ \delta'_2:\syz^2 N\ar@{>->}[r]& \ \  P_1\ar@{->>}[r]\ \ & \syz N.}\]}Continue this manner to get  the following commutative diagram:
{\footnotesize \[\xymatrix{\delta_X:\syz^nX~\ar@{>->}[r]\ar@{>->}[d]_{h'}& Q_{n-1}\ar[r]\ar@{>->}[d]& \cdots \ar[r]& Q_0\ar@{->>}[r]\ar@{>->}[d]& X\ar@{>->}[d]_{h}\\ \delta'_M:\syz^nM~\ar@{>->}[r]\ar@{->>}[d]_{f'} &P_{n-1}\oplus Q_{n-1}\ar[r]\ar@{->>}[d]&\cdots\ar[r]& P_0\oplus Q_0\ar@{->>}[r]\ar@{->>}[d]& M\ar@{->>}[d]_{f} \\ \delta'_N:\syz^nN~\ar@{>->}[r] &P_{n-1}\ar[r]&\cdots\ar[r]& P_0\ar@{->>}[r]& N.}\]}}According to Lemma \ref{pp1}, the equalities $h'\delta_X=\delta'_Mh$ and $f'\delta'_M=\delta'_Nf$ hold.} Since $T(h)=(\overline{\delta'_Mh},\delta'_M)=(\overline{h'\delta}_X, \delta'_M)$ and $T(f)=(\overline{\delta'_Nf},\delta'_N)=(\overline{f'\delta'}_M, \delta'_N)$,} the standard triangle that $f$ is embedded, can be represented as, $\syz N\st{T(g)}\lrt X\st{(\overline{h'\delta}_X, \delta'_M)}\lrt M\st{(\overline{f'\delta'}_M, \delta'_N)}\lrt N$. In particular, {$\syz^n X\st{h'}\rat \syz^n M\st{f'}\re\syz^n N$ can be assumed to be a conflation} in $\C$.
\end{obs}

{
\begin{lem}\label{place}Every morphism in $\C_{\p}$ is placed into an exact triangle.
\end{lem}
\begin{proof}Take a morphism $(\bar{\ga},\delta_N)\in\hom_{\C_{\p}}(M, N)$. Assume that $\ga=\delta_{N'}f$ is an $\ruf$ of $\ga$, and $\syz N\st{T(g)}\rt T_f\st{T(h)}\rt M\st{T(f)}\rt N'$ is the standard triangle that $f$ is embedded. One may get the following commutative diagram in $\C_{\p}$:  \[\xymatrix{\syz N~\ar[r]^{T(g)} \ar@{=}[d]& T_f\ar[r]^{T(h)}\ar@{=}[d]& M\ar@{=}[d]\ar[r]^{T(f)}& N'\ar[d]^{(\bar{\delta}_{N'},\delta_N)}\\ \syz N~\ar[r]^{T(g)} &T_f~\ar[r]^{T(h)} & M\ar[r]^{(\bar{\ga},\delta_N)}& N.}\]In view of \cite[Corollary 7.10]{bfss}, $(\bar{\delta}_{N'},\delta_N)$ is an isomorphism, and so, the bottom row is an exact triangle, as needed.
\end{proof}

\begin{lem}\label{rot}Let $f:M\rt N$ be a morphism in $\C$ and let $\syz N\st{T(g)}\lrt T_f\st{T(h)}\lrt M\st{T(f)}\lrt N$ be the standard triangle that $f$ is embedded. Then $\syz M\st{-\Syz(T(f))}\lrt\syz N\st{T(g)}\lrt T_f\st{T(h)}\lrt M$ is an exact triangle.
\end{lem}
\begin{proof}By our hypothesis, there is a pull-back diagram \[\xymatrix{\syz N~\ar@{>->}[r]^{g} \ar@{=}[d]
& T_f\ar@{->>}[r]^{h}\ar[d]& M\ar[d]_{f}\\ \syz N~\ar@{>->}[r] &P~\ar@{->>}[r] & N.}\]Taking a deflation $u:Q\re T_f$ with $Q\in n$-$\proj\C$, gives the following commutative diagram: 
\[\xymatrix{\syz M~\ar@{>->}[r] \ar[d]_{f'}& Q\ar@{->>}[r]\ar[d]_{u}& M\ar@{=}[d]\\ \syz N~\ar@{>->}[r]^{g} &T_f~\ar@{->>}[r]^{h} & M.}\]This, in turn, yields the commutative diagram
 \[\xymatrix{\syz M~\ar@{>->}[r]^{[-f'~~i]^t} \ar@{=}[d]& \syz N\oplus Q\ar@{->>}[r]^{[g~~u]}\ar[d]_{[0~~1]}& T_f\ar[d]_{h}\\ \syz M~\ar@{>->}[r]^{i} &Q~\ar@{->>}[r] & M,}\]which gives the standard triangle  $\syz M\st{T([-f'~~i]^t)}\lrt\syz N\oplus Q\st{T([g~~u])}\lrt T_f\st{T(h)}\lrt M$ in $\C_{\p}$. As $Q\in n$-$\proj\C$,  this triangle is  the same as the triangle $\syz M\st{T(-f')}\lrt\syz N\st{T(g)}\lrt T_f\st{T(h)}\lrt M$ in $\C_{\p}$. Now it is not hard to see that $\Syz T(f)=T(f')$, and so, the proof is finished.
\end{proof}
}

In the following result, we  adapt  the proof of the axiom (TR3) for the stable category of a Frobenius category so that it applies to  the case of the phantom stable categories of $n$-Frobenius categories.
 
\begin{prop}\label{zerofrob}Let $\C$ be a Frobenius category and  let
$$
\xymatrix{
\syz N\ar[r]^{\bar{g}}\ar[d]^{\overline{\syz\beta}} & L\ar[r]^{\bar{h}} & M\ar[r]^{\bar{f}}\ar[d]^{\bar{\theta}} & N\ar[d]^{\bar{\beta}} \\
\syz Z\ar[r]^{\bar{g_1}} & X\ar[r]^{\bar{h_1}} & Y\ar[r]^{\bar{f_1}} & Z}
$$be a commutative diagram in $\c$, where the rows are exact triangles. Then there is a morphism $\eta:L\to X$ in $\C$ making the completed diagram commutative.
\end{prop}
\begin{proof}First one should note that, as stated in Observation \ref{tt}(1), the sequences {$L\st{h}\rat M\st{f}\re N$ and  $X\st{h_1}\rat Y\st{f_1}\re Z$ can be assumed to be conflations}. Applying Remark \ref{rr}, one may have the following commutative diagram  in $\C$:{\footnotesize \[\xymatrix{L\ar@{>->}[r]^{}& \ \ M\oplus P\ar@{->>}[r]^{f'} \ \ \ar[d]_{[\theta~~h']} & \ N\oplus P\ar[d]_{[\beta~~b]}\\ X\ar@{>->}[r]^{h_1}& \ \ Y\ar@{->>}[r]^{f_1}\ \ & Z,}\]}where $f'={{\tiny {\left[\begin{array}{ll} f & 0 \\ a & 1 \end{array} \right]}}}$, $P$ is  projective and $f_1h'=b$. So there is an induced morphism $\eta: L\rt X$ making the completed diagram is commutative. Now taking deflations $Q\re M\oplus P$ and $Q'\re Y$ with $Q, Q'\in\proj\C$, one gets the following commutative diagram in $\C$:
{\footnotesize \[\xymatrix{\syz N\ar@{>->}[r]\ar[d]& \ \  Q\ar@{->>}[r] \ \ \ar[d] & \ N\oplus P\ar[d]\\ \syz Z\ar@{>->}[r]& \ \ Q\oplus Q'\ar@{->>}[r]\ \ & Z.}\]}Hence, it is easily seen that  there are induced morphisms  $e, e'$ such that the following diagram is commutative in $\C$.

$$
\xymatrix{
\syz N\ar[r]^{e}\ar[d]_{{\syz\beta}} & L\ar[r]^{{}}\ar[d]_{{\eta}} & M\oplus P\ar[r]^{{f'}}\ar[d]_{{[\theta~~h']}} & N\oplus P\ar[d]_{{[\beta~~b]}} \\
\syz Z\ar[r]^{{e'}} & X\ar[r]^{{h_1}} & Y\ar[r]^{{f_1}} & Z.}
$$
This, in turn, yields the following commutative diagram in $\c$:
$$
\xymatrix{
\syz N\ar[r]^{\bar{e}}\ar[d]_{\overline{\syz\beta}} & L\ar[r]^{{}}\ar[d]_{\bar{\eta}} & M\oplus P\ar[r]^{\bar{f'}}\ar[d]_{\overline{[\theta~~h']}} & N\oplus P\ar[d]_{\overline{[\beta~~b]}} \\
\syz Z\ar[r]^{\bar{e'}} & X\ar[r]^{\bar{h_1}} & Y\ar[r]^{\bar{f_1}} & Z.}
$$
Since  $P=0$ in $\c$, one has the equalities $\bar{f}=\bar{f'}$, $\overline{[\theta~~h']}=\bar{\theta}$ and $\overline{[\beta~~b]}=\bar{\beta}$. So the proof is finished.
\end{proof}

We should point out that although the following fact is stated for 1-Frobenius categories, it remains true for each $n$-Frobenius category with $n\geq 2$.

\begin{rem}\label{1212} Assume that $\C$ is a 1-Frobenius category. Consider the following commutative diagram in $\C$:
{\footnotesize\[\xymatrix{\Syz\be:\syz^2 Z\ar[d]_{g'}\ar@{>->}[r]& \syz T~\ar@{->>}[r]\ar[d]& \syz N\ar[d]_{g}\\ \ga:\syz X~\ar@{>->}[r]\ar[d]_{h'} & K\ar@{->>}[r]\ar[d]& L\ar[d]_{h}\\ \th:\syz Y\ar@{>->}[r] \ar[d]_{f'}& E~\ar@{->>}[r]\ar[d] & M\ar[d]_{f}\\ \be:\syz Z\ar@{>->}[r] & T~\ar@{->>}[r] & N,}\]}where the rows are conflation. Applying Remark \ref{remsq2} successively, yields the following commutative diagram in $\C_{\p}$:

{\[\xymatrix{\syz N\ ~\ar[r]^{T(g)} \ \ \ar[d]_{\Syz(\bar{\be}, \delta_{Z})}& \ \ L\ar[r]^{T(h)}\ar[d]_{(\bar{\ga}, \delta_X)} \ \ & \ \ M\ar[r]^{T(f)} \ar[d]_{(\bar{\th}, \delta_{Y})}\ \ & \ N\ar[d]_{(\bar{\be}, \delta_{Z})}\\ \syz Z\ ~\ar[r]^{(\overline{g'\delta}_{\syz Z}, \delta_X)} \ \ &\ \ X\ar[r]^{(\overline{h'\delta}_X, \delta_Y)} \ \ & \ \ Y\ar[r]^{(\overline{f'\delta}_Y, \delta_Z)} \ \ & Z.}\]}
\end{rem}

Inspired by Remark \ref{1212} and modifying the proof of  Proposition \ref{zerofrob}, 
 we establish  the validity of the axiom (TR3) in the phantom stable  category $\C_{\p}$.

\begin{theorem}\label{axiomtr3}Let
\begin{equation}\label{tr3}
\xymatrix{\syz N\ ~\ar[r]^{T(g)} \ \ \ar[d]_{\Syz(\bar{\be}, \delta_{Z})}& \ \ L\ar[r]^{T(h)} \ \ & \ \ M\ar[r]^{T(f)} \ar[d]_{(\bar{\th}, \delta_{Y})}\ \ & \ N\ar[d]_{(\bar{\be}, \delta_{Z})}\\ \syz Z\ ~\ar[r]^{T(g_1)} \ \ &\ \ X\ar[r]^{T(h_1)} \ \ & \ \ Y\ar[r]^{T(f_1)} \ \ & Z,}
\end{equation}
be a commutative diagram in $\C_{\p}$, where rows are standard triangles. Then there is a morphism $L\st{(\bar{\et}, \delta_{X})}\lrt X$  making the completed diagram commutative.
\end{theorem}
\begin{proof}Let us deal only 
 with the case $n=1$, because for $n\geq 2$ the proof is similar. According to Observation \ref{tt}(2), one may find morphisms  $f'_1:\syz Y\rt\syz Z$ and $h_1':\syz X\rt\syz Y$  in $\C$ such that the equalities $T(h_1)=(\overline{h_1'\delta_X}, \delta_Y)$ and $T(f_1)=(\overline{f'_1\delta}_{Y},\delta_{Z})$ hold true. Also, considering an $\luf$ $g'_1\delta_{\syz Z}=\delta_X g_1$ of $\delta_X g_1$, where  $g_1':\syz^2Z\rt\syz X$ is a morphism in $\C$, one obtains the equality $T(g_1)=(\overline{g_1'\delta_{\syz Z}}, \delta_X)$. In particular, one gets the following commutative diagram:
\begin{equation}\label{tr3'}
\xymatrix{\syz N\ ~\ar[r]^{T(g)} \ \ \ar[d]_{\Syz(\bar{\be}, \delta_{Z})}& \ \ L\ar[r]^{T(h)} \ \ & \ \ M\ar[r]^{T(f)} \ar[d]_{(\bar{\th}, \delta_{Y})}\ \ & \ N\ar[d]_{(\bar{\be}, \delta_{Z})}\\ \syz Z\ ~\ar[r]^{(\overline{g_1'\delta}_{\syz Z}, \delta_X)} \ \ &\ \ X\ar[r]^{(\overline{h_1'\delta}_{X}, \delta_Y)} \ \ & \ \ Y\ar[r]^{(\overline{f'_1\delta}_{Y},\delta_{Z})} \ \ & Z.}
\end{equation}
Since the right square is commutative,  Proposition \ref{sq1} gives rise to the existence of the following commutative diagram in $\C$: {\footnotesize\[\xymatrix{\th: \syz Y~\ar@{>->}[r]\ar[d]_{f'_1}& E \ar@{->>}[r]\ar[d]_{h''}& M \ar[d]^{[f~~e]^t}\\ \be': \syz Z~\ar@{>->}[r] &L'~\ar@{->>}[r] & N\oplus P',}\]}for some $P'\in$1-$\proj\C$. As the bottom row  in (\ref{tr3}) is an exact triangle, by Observation \ref{tt}(1), {$X\st{h_1}\rat Y\st{f_1}\re Z$} can be assumed to be a conflation in $\C$. Now one may apply Observation \ref{tt}(2) and get the conflation $\syz X\st{h'_1}\rat\syz Y\st{f'_1}\re\syz Z$.{ In particular, one has the following diagram:\[\xymatrix{
\syz X\ar@{>->}[d]_{h_1'}& & \\ \th:\syz Y~\ar@{>->}[r]\ar@{->>}[d]_{f'_1} & E\ar@{->>}[r]\ar[d]_{h''}& M\ar[d]_{[f~~e]^t}\\ \be':\syz Z\ar@{>->}[r]^i & L'~\ar@{->>}[r]^{} & N\oplus P'.}\]
 Now taking a deflation $P\st{h'}\re L'$ with $P\in$1-$\proj\C$ and considering the preceding diagram,} we may obtain the following commutative diagram in $\C$: \[\xymatrix{\ga':\syz X\ar@{>->}[r]\ar@{>->}[d]_{h_1'}& K~\ar@{->>}[r]\ar@{>->}[d]& T\ar@{>->}[d]_{h'''}\\ \th':\syz Y~\ar@{>->}[r]\ar@{->>}[d]_{f'_1} & E\oplus P\ar@{->>}[r]^{l'}\ar@{->>}[d]_{[h''~~h']}& M\oplus P\ar@{->>}[d]_{l}\\ \be':\syz Z\ar@{>->}[r]^i & L'~\ar@{->>}[r]^{[l_1~~l_2]^t} & N\oplus P',}\]where  $l={{\tiny {\left[\begin{array}{ll} f & l_1h' \\ e & l_2h' \end{array} \right]}}}$. Consider deflations $Q_1\st{[\pi_1~~\pi_2]^t}\re E\oplus P$ and $P_1\st{\pi'_1}\re\syz Y$ with $P_1, Q_1\in$1-$\proj\C$, and get the following commutative diagram in $\C$
{\footnotesize \[\xymatrix{\be'':\syz^2Z\ar@{>->}[d]\ar@{>->}[r]& \syz L'~\ar@{->>}[r]\ar@{>->}[d]& \syz N\ar@{>->}[d]\\ \al:P_1~\ar@{>->}[r]^{[1~~0]^t}\ar@{->>}[d]_{a} & P_1\oplus Q_1\ar@{->>}[r]^{[0~~1]}\ar@{->>}[d]_{b}& Q_1\ar@{->>}[d]_{c}\\ \be':\syz Z\ar@{>->}[r]^{i} & L'~\ar@{->>}[r] & N\oplus P',}\]}where $a=f'_1\pi'_1$, $b=[b_1~~b_2]$ with $b_1=ia$, $b_2=[h''~~h'][\pi_1~~\pi_2]^t$ and $c=ll'[\pi_1~~\pi_2]^t$. Since the morphism of conflations $(a, b, c): \al\lrt\be'$ is indeed the composition of morphisms of conflatins $\al\lrt\th'\lrt\be'$, we infer that the composition of morphisms of conflations $\be''\lrt\th'\lrt\be'$ is zero. Now the universal property of kernel gives us the induced morphisms $e', e'', e'''$ in $\C$ making the following diagram commutes in $\C$.
{\footnotesize\[\xymatrix{\be'':\syz^2 Z\ar[d]_{e'}\ar@{>->}[r]& \syz L'~\ar@{->>}[r]\ar[d]_{e''}& \syz N\ar[d]_{e'''}\\ \ga':\syz X~\ar@{>->}[r]\ar[d]_{h_1'} & K\ar@{->>}[r]\ar[d]& T\ar[d]_{h'''}\\ \th':\syz Y\ar@{>->}[r] \ar[d]_{f'_1}& E\oplus P~\ar@{->>}[r]\ar[d] & M\oplus P\ar[d]_{l}\\ \be':\syz Z\ar@{>->}[r] & L'~\ar@{->>}[r] & N\oplus P'.}\]}Thus, applying Remark \ref{1212}, gives us the following commutative diagram in $\C_{\p}:$
\begin{equation}\label{tr3''}
\xymatrix{\syz N\ ~\ar[r]^{T(e''')} \ \ \ar[d]_{(\bar{\be''}, \delta_{\syz Z})}& \ \ T\ar[r]^{T(h''')}\ar[d]_{(\bar{\ga'}, \delta_X)} \ \ & \ \ M\oplus P\ar[r]^{T(l)} \ar[d]_{(\bar{\th'}, \delta_{Y})}\ \ & \ N\oplus P'\ar[d]_{(\bar{\be'}, \delta_{Z})}\\ \syz Z\ ~\ar[r]^{(\overline{e'\delta}_{\syz Z}, \delta_X)} \ \ &\ \ X\ar[r]^{(\overline{h_1'\delta}_{X}, \delta_Y)} \ \ & \ \ Y\ar[r]^{(\overline{f'_1\delta}_{Y},\delta_{Z})} \ \ & Z.}
\end{equation}
By our construction, rows are exact triangles. Since $P,P'\in$1-$\proj\C$, $T(f)=T(l)$. This, in turn, yields that the upper row is the triangle that $f$ is embedded. Particularly, $L\cong T$ in $\C_{\p}$. Also, the lower row in this diagram  is the lower triangle in the diagram (\ref{tr3}). Furthermore, using \ref{123} implies that $(\bar{\be''}, \delta_{\syz Z})=\Syz(\bar{\be'}, \delta_Z)$.  Another use of the fact that $P,P'\in 1$-$\proj\C$, yields that $\bar{\th'}=\bar{\th}$ and $\bar{\be'}=\bar{\be}$. Hence, $(\bar{\ga'}, \delta_X)$ is the desired morphism, and so, the proof is finished.

\end{proof}

The following pair of results will be used in verifying the validity of the axiom (TR4).

\begin{prop}\label{55}
 Let  {\footnotesize \[\xymatrix{\ga:X \ar[d]_{e_1}\ar@{>->}[r]^{h}& \ \ M\ar@{->>}[r]^{f} \ \ \ar[d]_{e_2} & \ N\ar[d]_{e_3}\\ \ga':X'\ar@{>->}[r]^{h'}& \ \ M'\ar@{->>}[r]^{f'}\ \ & N'}\]}be a commutative diagram in $\C$, where the rows are conflations. Then there is an induced morphism of exact triangles \[\xymatrix{\syz N~\ar[r]^{T(g)} \ar[d]_{\Syz T( e_3)}& X\ar[r]^{T(h)}\ar[d]_{T(e_1)}& M\ar[d]_{T(e_2)}\ar[r]^{T(f)}& N\ar[d]_{T(e_3)}\\ \syz N'~\ar[r]^{T(g')} &X'~\ar[r]^{T(h')} & M'\ar[r]^{T(f')}& N'.}\]
 \end{prop}
 \begin{proof}Consider an $\luf$ of $\ga$ as follows:
 {\footnotesize \[\xymatrix{\delta_N:\syz N \ar[d]_{g}\ar@{>->}[r]& \ \ Q\ar@{->>}[r] \ \ \ar[d] & \ N\ar@{=}[d]\\ \ga:X\ar@{>->}[r]^{h}& \ \ M\ar@{->>}[r]^{f}\ \ & N.}\]}So  taking  a deflation $P\re M'$ with $P\in n$-$\proj\C$, one gets the commutative diagram {\footnotesize{\[\xymatrix{&\syz N \ar@{>->}[rr]\ar[dd]_{g}\ar[ld]_{e'_3}&&Q\ar@{->>}[rr]\ar[dd]\ar[dl]&&N\ar[dl]_{e_3}\ar@{=}[dd]\\ \syz N'\ar@{>->}[rr]\ar[dd]_{g'}&&Q\oplus P\ar@{->>}[rr]\ar[dd]&& N'\ar@{=}[dd]\\ &\ga:X\ar@{>->}[rr]\ar[dl]_{e_1}&&M\ar@{->>}[rr]\ar[dl]_{e_2}&& N\ar[dl]_{e_3}\\ \ga':X'\ar@{>->}[rr]&& M'\ar@{->>}[rr]&&N'.}\]}}This, in turn, gives us the following commutative diagram in $\C$:
\[\xymatrix{\syz N~\ar[r]^{g} \ar[d]_{e'_3}& X\ar[r]^{h}\ar[d]_{e_1}& M\ar[d]_{e_2}\ar[r]^{f}& N\ar[d]_{e_3}\\ \syz N'~\ar[r]^{g'} &X'~\ar[r]^{h'} & M'\ar[r]^{f'}& N'.}\]Now applying the functor $T$ to the latter diagram and using the fact that $\Syz T(e_3)=T(e'_3)$, would give us the desired result.
\end{proof}

\begin{prop}\label{tr4}Let $A\st{(\bar{\th},\delta_B)}\lrt B\st{(\bar{\et},\delta_C)}\lrt C$ be a composition of morphisms in $\C_{\p}$. Then there exist objects $B', C'$ and morphisms $A\st{h}\rt B'\st{f}\rt C'$ in $\C$ such that {\footnotesize\[\xymatrix{ A~\ar[r]^{(\bar{\th},\delta_B)}\ar@{=}[d]& B \ar[r]^{(\bar{\et},\delta_C)}\ar[d]_{}& C \ar[d]_{}\\ A~\ar[r]^{T(h)} &B'~\ar[r]^{T(f)} & C',}\]} is a commutative diagram in $\C_{\p}$ in which the columns are isomorphisms.
\end{prop}
\begin{proof}

Take an $\ruf$ $\th=\delta_{B'}h$ of $\th$, where $h:A\rt B'$ is a morphism in $\C$ and consider a co-angled pair $\delta_B\st{a}\lf\delta_{B''}\st{b}\rt\delta_{B'}$, to get the equalities $(\bar{\th},\delta_B)=(\overline{\delta_{B'}h},\delta_B)=(\overline{(\delta_Ba)b^{-1}h},\delta_B)$. Setting $\bar{\et'}:=\overline{(\et a)b^{-1}}$, we have the morphism $B'\st{(\bar{\et'}, \delta_C)}\lrt C$ in $\C_{\p}$. Assume that $\et'=\delta_{C'}f$ is an $\ruf$ of $\et'$, where $f:B'\rt C'$ is a morphism in $\C$. Now, we will get the following commutative diagram in $\C_{\p}$:

{\footnotesize\[\xymatrix{ A~\ar[r]^{(\bar{\th},\delta_B)}\ar@{=}[d]& B \ar[r]^{(\bar{\et},\delta_C)}\ar[d]_{(\bar{\delta}_B,\delta_{B'})}& C \ar[d]_{(\bar{\delta}_C,\delta_{C'})}\\ A~\ar[r]^{T(h)} &B'~\ar[r]^{T(f)} & C',}\]}in which the all columns are isomorphisms, thanks to \cite[Corollary 7.10]{bfss}. Thus the proof is finished.
\end{proof}

Now we are ready to prove the main result of this paper.
{
\begin{theorem}\label{lt}The triple $(\C_{\p}, \Syz, \Delta)$ is a triangulated category.
\end{theorem}
\begin{proof}
(TR1): By Lemma \ref{place}, any morphism in $\C_{\p}$ is placed into a standard triangle. Moreover, for a given object $M\in\C$, we may consider a syzygy sequence $\delta_M:\syz M\st{g}\rt P\st{h}\rt M$, and taking the pull-back of $\delta_M$ along the morphism $1_M$, gives rise to the standard triangle  $\syz M\st{T(g)}\lrt P\st{T(h)} \lrt M \st{T(1_M)}\lrt M$ in $\C_{\p}$. It should be noted that $P=0$ in $\C_{\p}$.

(TR2): We must show that any rotation of an exact triangle, is again an exact triangle. In this direction, it suffices to consider standard triangles. Assume that $\syz N\st{T(g)}\rt T_f\st{T(h)}\rt M\st{T(f)}\rt N$ is a standard triangle. According to Lemma \ref{rot}, $\syz M\st{-\Syz(T(f))}\rt\syz N\st{T(g)}\rt T_f\st{T(h)}\rt M$  is an exact triangle, as needed.

(TR3): The validity of the axiom (TR3), follows from Theorem \ref{axiomtr3}.

(TR4): Assume that $A\st{(\bar{\th}, \delta_B)}\lrt B\st{(\bar{\et}, \delta_C)}\lrt C$ is a composition of morphisms in $\C_{\p}$. {By Proposition \ref{tr4}, we may replace morphisms $(\bar{\th}, \delta_B)$ and $(\bar{\et}, \delta_C)$ with $T(f)$ and $T(g)$, respectively, for some morphisms  $f:A\rt B$ and $g:B\rt C$ in $\C$. Suppose that $\syz B\st{T(f'')}\lrt E\st{T(f')}\lrt A\st{T(f)}\lrt B$,
$\syz C\st{T(h'')}\lrt F\st{T(h')}\lrt A\st{T(gf)}\lrt C$ and $\syz C\st{T(g'')}\lrt D\st{T(g')}\lrt B\st{T(g)}\lrt C$ are corresponding standard triangles. We would like to find  an exact triangle $\syz D\lrt E\lrt F\lrt D$ which commutes the following diagram in $\C_{\p}$.

\begin{equation}\label{trr}
\xymatrix{& \syz D\ar[r]\ar[d]& \syz B\ar[d]_{T(f'')} &\\ \syz B\ar[d]_{\Syz T(g)}\ar[r]& E\ar@{=}[r]\ar[d]& E\ar[d]_{T(f')} &\\ \syz C~\ar[r]^{T(h'')}\ar@{=}[d]& F \ar[r]^{T(h')}\ar[d]& A\ar[r]^{T(gf)}\ar[d]_{T(f)} & C\ar@{=}[d]\\ \syz C~\ar[r]^{T(g'')}& D\ar[r]^{T(g')}& B\ar[r]^{T(g)} &C.}
\end{equation}

 By our hypothesis, the triangle in the bottom row is arisen from the following pull-back diagram:}

  \[\xymatrix{\syz C~\ar@{>->}[r]^{g''} \ar@{=}[d]& D\ar@{->>}[r]^{g'}\ar[d]_{\varphi}& B\ar[d]_g\\ \delta_C:\syz C~\ar@{>->}[r] &P~\ar@{->>}[r]^{\pi} & C,}\]which gives us the conflation $D\st{[g'~~\varphi]^t}\rat B\oplus P\st{[-g~~\pi]}\re C$. Now taking a deflation $Q'\st{[\phi_1~~\phi_2]^t}\re B\oplus P$ with $Q'\in n$-$\proj\C$,  one may obtain the following commutative diagram in $\C$
 \begin{equation}\label{trtr}
\xymatrix{E'~\ar@{=}[r] \ar@{>->}[d]_{k}& E'\ar@{>->}[d]_{f'_1}& \\ F'~\ar@{>->}[r]^{h'_1} \ar@{->>}[d]_{l'}& A\oplus Q'\ar@{->>}[r]^{\psi}\ar@{->>}[d]_{l}& C\ar@{=}[d]\\ D~\ar@{>->}[r]^{[g'~~\varphi]^t} &B\oplus P~\ar@{->>}[r]^{[-g~~\pi]} & C,}
\end{equation}
{{where $l={{\tiny {\left[\begin{array}{ll} f & \phi_1 \\ 0 & {\phi_2} \end{array} \right]}}}$ and $\psi=[-gf~~~~\pi\phi_2-g\phi_1]$. Consider the following commutative diagram in $\C:$ 
\begin{equation}\label{ttrr}
\xymatrix{ E'~\ar@{>->}[r]^{f'_1} \ar[d]_{s}& A\oplus Q'\ar@{->>}[r]^{l}\ar@{=}[d]& B\oplus P\ar[d]_{[-g~~\pi]}\\ F'~\ar@{>->}[r]^{h'_1} &A\oplus Q'~\ar@{->>}[r]^{\psi} & C,}
\end{equation}
in which $s$ is an induced map. One should note that since $h'_1s=h'_1k$, and $h'_1$ is a monomorphism, we have that $k=s$. Now applying  Proposition \ref{55} to \ref{trtr} and \ref{ttrr}, gives us  the following commutative diagram in $\C_{\p}$:
\begin{equation}\label{tr4'}
\xymatrix{& \syz D\ar[r]\ar[d]& \syz B\ar[d] &\\ \syz B\ar[d]_{\Syz T(g)}\ar[r]& E'\ar@{=}[r]\ar[d]_{T(k)}& E'\ar[d]_{T(f'_1)} &\\ \syz C~\ar[r]\ar@{=}[d]& F' \ar[r]^{T(h'_1)}\ar[d]_{T(l')}& A\oplus Q'\ar[r]^{T(\psi)}\ar[d]_{T(l)} & C\ar@{=}[d]\\ \syz C~\ar[r]& D\ar[r]^{T([g'~~\varphi]^t)}& B\oplus P\ar[r]^{T([-g~~\pi])} &C,}
\end{equation}
}
where the rows and columns are exact triangles. Since $n$-projectives coincide with the zero objects in $\C_{\p}$ and  $T(\psi)=-T(gf)$, the exact triangles $\syz C\rt F'\st{T(h'_1)}\rt A\oplus Q'\st{T(\psi)}\rt C$ and $\syz C\st{T(h'')}\rt F\st{T(h')}\rt A\st{-T(gf)}\rt C$ are isomorphic. Also, as $T(l)=T(f)$, the exact triangles $\syz B\rt E'\st{T(f'_1)}\rt A\oplus Q'\st{T(l)}\rt B\oplus P$ and $\syz B\st{T(f'')}\rt E\st{T(f')}\rt A\st{T(f)}\rt B$ are also isomorphic, as well. These yield that the exact triangle $\syz D\rt E'\st{T(k)}\rt F'\st{T(l')}\rt D$ is the desired triangle, which appears  as the middle column in the diagram (\ref{trr}). } So the proof is completed.

\end{proof}


Assume that $\C$ is a Frobenius category. As mentioned before, it can be considered as an $n$-Frobenius category, for each integer $n\geq 1$. Suppose that $\c$ is the stable category of $\C$ modulo projectives and $F:\C\rt \c$ is the additive quotient functor. According to  Remark \ref{0frob}, $(\c, F)$ is the stabilization of  $\C$ as a 0-Frobenius category, which is known to be triangulated. On the other hand, assume that $(\C_{\p}, T)$ is the stabilization of $\C$, considered as an $n$-Frobenius category.  The following result asserts that these two  triangulated structures are equivalent. First we include a lemma.

 \begin{lem}\label{01}Let $\C$ be a Frobenius category. Then for any object $N\in\C$ and any integer $n\geq 1$, the subfunctor $\p$ of $\Ext^n(-, \syz^nN)$ vanishes. In particular, $\Ext^n(-, \syz^nN)/\p\cong\Ext^n(-, \syz^nN)$.
\end{lem}
\begin{proof}{Take an arbitrary object $M\in\C$. We would like to show that a given $\p$-conflation $\th\in\Ext^n(M, \syz^nN)$ is zero.} By the hypothesis, there exist a conflation $\ga\in\Ext^n(P, \syz^nN)$ with $P\in n$-$\proj\C$, and a morphism $f:M\rt P$ in $\C$ such that $\th=\ga f$. Since $P\in n$-$\proj\C$, it is easily seen that $P$ is a projective object of $\C$, as well. Considering $\ga$ as the conflation $\syz^nN\rat L_{n-1}\rt\cdots\rt L_0\st{\pi}\re P$, we infer that $\pi$ is a split epimorphism. This, in turn,  implies that $\ga$  is zero as an object of $\Ext^n(P, \syz^nN)$, and so, $\th$ will be a zero object of $\Ext^n(M, \syz^nN)$. Thus the proof is finished.
\end{proof}

\begin{theorem}\label{thm}Let $\C$ be a Frobenius category. Then two  triangulated categories $(\c, F)$ and $(\C_{\p}, T)$ are equivalent.
\end{theorem}
\begin{proof}Take a morphism $f$ in $\C$ that factors through a projective object. Since each projective object is $n$-projective, we have that $T(f)=0$ in $\C_{\p}$. Universal property of $(\c, F)$ gives us a unique additive covariant functor $F':\c\rt\C_{\p}$ such that $T=F'F$. Assume that $f:M\rt N$ is an $n$-$\Ext$-phantom morphism in $\C$. So, for a given unit conflation $\delta_N\in\U_n(N)$, $\delta_Nf$ is a $\p$-conflation, see \cite[Proposition 6.2]{bfss}. Hence, invoking Lemma \ref{01}, enables us to infer that $f$ factors through an $n$-projective (and so, projective) object, implying that $F(f)=0$ in $\c$. Finally, assume that $f$ is an $n$-$\Ext$-invertible morphism. We shall prove that $F(f)$ is an isomorphism. To this end,  as $f$ is an $n$-$\Ext$-invertible morphism, by \cite[Lemma 3.3]{bfss}, there exists a conflation $M\st{[f~~l]^t}\rat N\oplus P\re Q$ in $\C$, where $P, Q\in n$-$\proj\C$. Since each $n$-projective object is indeed a projective object, this sequence is split, and then, we get the isomorphism $M\oplus Q\st{{{\tiny {\left[\begin{array}{ll} f & g_1 \\ l & g_2 \end{array} \right]}}}}\lrt N\oplus P$ in $\C$. This would  imply that $F(f)$ is an isomorphism. Thus the universal property of the phantom stable category $(\C_{\p}, T)$ gives rise to the existence of a unique additive covariant functor $T':\C_{\p}\rt \c$ with $T'T=F$. Since by the universal property, the functors $F'$ and $T'$ are uniquely determined up to natural isomorphism, we have $T'F'\cong 1_{\c}$ and  $F'T'\cong 1_{\C_{\p}}$. It only remains to show that the functors $F'$ and $T'$ are triangulated. As these functors are identity on objects and $n$-projectives and projectives are the same, one may deduce that the functors $F'$ and $T'$ commute with the shift functors.
Now we show that these functors send standard triangles to standard triangles. To this end, for a given object $Z\in\C$, take a unit conflation $\delta_Z$. By the construction of $F'$, we have $F'(\bar{1}_Z)=(\bar{\delta}_Z, \delta_Z)$. Moreover, for a given morphism $\bar{f}\in\hom_{\c}(M, Z)$, one has $F'(\bar{f})=(\overline{\delta_Zf}, \delta_Z)$. Next assume that $f:M\rt N$ is a morphism in $\c$. Consider the following pull-back diagram in $\C$: \[\xymatrix{\syz N~\ar@{>->}[r]^{g} \ar@{=}[d]& T_f\ar@{->>}[r]^{h}\ar[d]& M\ar[d]_f\\ \syz N~\ar@{>->}[r] &P~\ar@{->>}[r]^{\pi} & N.}\]This gives us the standard triangle $\syz N\st{\bar{g}}\rt T_f\st{\bar{h}}\rt M\st{\bar{f}}\rt N$  in $\c$. Applying the functor $F'$ yeilds the sequemce $\syz N\st{F'(\bar{g})}\lrt T_f\st{F'(\bar{h})}\lrt M\st{F'(\bar{f})}\lrt N$  in $\C_{\p}$. This is indeed the standard triangle $\syz N\st{T(g)}\lrt T_f\st{T(h)}\lrt M\st{T(f)}\lrt N$ that $f$ is embeded. This means that the functor $F'$ sends each standard triangle in $\c$ to standard triangle in $\C_{\p}$. Similarly, one may see that the functor $T'$ also sends standard triangles in $\C_{\p}$ to standard triangles in $\c$. Thus the proof is finished.
\end{proof}

 

{\bf{Acknowledgements.}} The authors are grateful to the referee for reading the paper very carefully and providing useful suggestions kindly and patiently.



\end{document}